\newcommand{\R}{\mathbb{R}}
\newcommand{\Z}{\mathbb{Z}}
\newcommand{\cM}{\mathcal{M}}
\newcommand{\cP}{\mathcal{P}}
\newcommand{\cQ}{\mathcal{Q}}
\DeclareMathOperator{\reg}{reg}
\DeclareMathOperator{\sing}{sing}
\DeclareMathOperator{\sym}{sym}
\DeclareMathOperator{\Star}{Star}
\newcommand{\simto}{\overset\sim\to}
\newcommand{\qand}{{\quad\text{and}\quad}}
\numberwithin{equation}{section}       
\newtheorem{prop} {Proposition} [section]
\newtheorem{thm}[prop] {Theorem} 
\newtheorem*{thmA}{Theorem A} 
\newtheorem*{thmB}{Theorem B} 
\newtheorem*{thmC}{Theorem C} 
\newtheorem{lem}[prop] {Lemma}
\newtheorem{rmk}[prop] {Remark}
\newtheorem{exam}[prop] {Example}
\title[Monge--Amp\`ere equations and regularity]{Regularity of the solution to a real Monge--Amp\`ere equation on the boundary of a simplex}
\date{\today}
\author[AHJMM]{Rolf Andreasson \and Jakob Hultgren \and Mattias Jonsson \and Enrica Mazzon \and Nicholas McCleerey}
\address{Dept of Mathematical Sciences\\
Chalmers University of Technology\\
412 96 G\"oteborg\\
Sweden}
\email{rolfan@chalmers.se}
\address{Dept of Mathematics and Mathematical Statistics\\
  Ume{\aa} University\\ 
  901 87 Ume{\aa}\\
  Sweden}
\email{jakob.hultgren@umu.se}
\address{Dept of Mathematics\\
  University of Michigan\\
  Ann Arbor, MI 48109-1043\\ USA}
\email{mattiasj@umich.edu}
\address{Fakult\"at für Mathematik\\
Universit\"at Regensburg\\
93040 Regensburg\\
Germany}
\email{e.mazzon15@alumni.imperial.ac.uk}
\address{Dept of Mathematics\\
Purdue University\\
West Lafayette, Indiana 47907-2067\\
USA}  
\email{nmccleer@purdue.edu}
\begin{document}

\begin{abstract}
    Motivated by conjectures in Mirror Symmetry, we continue the study of the real Monge--Amp\`ere operator on the boundary of a simplex. This can be formulated in terms of optimal transport, and we consider, more generally, the problem of optimal transport between symmetric probability measures on the boundary of a simplex and of the dual simplex. For suitably regular measures, we obtain regularity properties of the transport map, and of its convex potential. To do so, we exploit boundary regularity results for optimal transport maps by Caffarelli, together with the symmetries of the simplex.
\end{abstract}
\maketitle

\setcounter{tocdepth}{1}
\tableofcontents

\section*{Introduction}
A celebrated result by Yau~\cite{Yau}, of fundamental importance to complex geometry, shows that suitable complex Monge--Amp\`ere equations on compact K\"ahler manifolds admit smooth solutions. 

A corresponding result for the \emph{real} Monge--Amp\`ere equation was proved by Cheng and Yau in~\cite{ChengYau} and takes place on a compact special affine manifold, i.e.\ a compact manifold equipped with a volume form, an atlas for which the transition functions are affine and volume preserving; see also~\cite{HO19,GuedjTo}. Their result has important consequences: combined with the J\"orgens--Calabi--Pogorelov theorem, it completely characterizes compact special affine manifolds admitting Hessian metrics as quotients of real tori. However, for applications related to Lagrangian fibrations, the assumption of smoothness is often too strong. In particular, 
developments in 
mirror symmetry---notably the SYZ conjecture---motivate a study of the real Monge--Amp\`ere equation on compact manifolds equipped with a \emph{singular} special affine structure, i.e.\ a special affine structure outside a subset of codimension two. It is nontrivial to even make sense of the Monge--Amp\`ere equation in the presence of singularities, but breakthrough work by Yang Li~\cite{LiFermat,LiSYZ} on the SYZ conjecture has sparked progress in this area~\cite{HJMM24,AH23,LiFano}. Here we follow up on~\cite{HJMM24}, where the manifold is the boundary of a simplex. 

Fix $d\in\Z_{\ge1}$, and let $\Delta$ be a $(d+1)$-dimensional simplex, embedded in a $(d+1)$-dimensional vector space $V$ such that the barycenter of $\Delta$ is the origin. We can equip $A:=\partial\Delta$ with a singular special affine structure, in which the volume form is Lebesgue measure, normalized to mass one, and the regular set $A_{\reg}$ is the union of the interiors of the $d$-dimensional faces together with the open stars of the vertices of $A$ in the barycentric subdivision of $A$. The co-dimension of $A_{\reg}$ is 2. See Section~\ref{sec:Setup} for details.

Denote by $\Delta^\vee\subset V^\vee$ the polar simplex, consisting of linear functions $\ell\colon V\to\R$ with $\ell|_\Delta\le1$, and let $\cP\subset C^0(A)$ be the closed convex set consisting of restrictions to $A$ of convex functions $\phi\colon V\to\R$ such that $\phi=\sup_{\ell\in\Delta^\vee}\ell+O(1)$. We can equip $A$ with a principal $\R$-bundle $\Lambda_A$, affine over $A_{\reg}$, and any $\phi\in\cP$ induces a continuous metric on $\Lambda_A$ that is convex over $A_{\reg}$ (see \cite[\S3.4]{HJMM24}).

In~\cite{HJMM24}, the last four authors constructed a Monge--Amp\`ere operator that takes symmetric functions in $\cP$ to symmetric probability measures on $A$. More precisely, there is a unique action of the permutation group $G=S_{d+2}$ on $V$ by linear transformations preserving $\Delta$ and permuting the vertices. Let $\cP_{\sym}\subset\cP$ be the set of $G$-invariant functions. Then it follows from~\cite[Theorem~B]{HJMM24} that there exists a unique continuous map $\phi\mapsto\mu_\phi$ from $\cP_{\sym}$ to the space $\cM_{\sym}(A)$ of symmetric probability measures on $A$ such that $\mu_\phi|_{A_{\reg}}$ is the Monge--Amp\`ere measure of the convex metric induced by $\phi$.
Moreover, this operator induces a homeomorphism $\cP_{\sym}/\R\simto\cM_{\sym}(A)$. 

\medskip
A key motivation for studying the Monge--Amp\`ere equation on the boundary on a simplex (or more general reflexive polytopes) comes from applications to Mirror Symmetry and specifically the SYZ conjecture: see~\cite{SYZ,GS06,Gro13,KS06, Loftin, MP21,PS22}. However, here we will consider the Monge--Amp\`ere equation in its own right. Specifically, in  analogy to Yau's theorem, we shall study the regularity of the solution $\phi\in\cP_{\sym}$ to the equation $\mu_\phi=\mu$, for suitably regular measures $\mu\in\cM_{\sym}(A)$. 
Our first theorem reads as follows.
\begin{thmA}\label{thm:RegularityOfMetric}
    Let $\Delta$ be a simplex, and $\mu$ a symmetric probability measure on $A=\partial\Delta$ of the form $\mu=f\,d\mu_A$, where $\mu_A$ is Lebesgue measure on $A$ and $f\in C^\infty(A_{\reg})$ satisfies $0<\inf f\le\sup f<\infty$. Then the metric on $A_{\reg}$ induced by $\phi$ is $C^\infty$, strictly convex, and uniformly $C^{1,\alpha}$ for some $\alpha>0$. 
\end{thmA}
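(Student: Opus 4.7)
The plan is to recast the equation $\mu_\phi = \mu$ as an optimal transport problem between $A = \partial\Delta$ and $\partial\Delta^\vee$, and then apply Caffarelli's interior and boundary regularity theorems in two families of affine charts that cover $A_{\reg}$: the interiors of the facets of $\Delta$, and the open stars of the vertices of $\Delta$. The symmetries in $G = S_{d+2}$ serve both to transfer regularity between charts and to verify the convexity hypotheses of Caffarelli's theorems on the dual side.

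First, building on the construction of $\mu_\phi$ in~\cite{HJMM24}, one recasts the problem as optimal transport: the convex potential $\phi$ defines (via its subdifferential) a transport map $T\colon A \to \partial\Delta^\vee$ that pushes $\mu$ forward to a $G$-invariant probability measure $\nu$ on $\partial\Delta^\vee$. By $G$-invariance, on the interior of any facet $F$ of $\Delta$, the map $T$ restricts to an optimal transport between the normalized restrictions of $\mu$ and $\nu$ to $F^\circ$ and to the interior of the dual facet $F^\vee$ respectively. Both domains are convex open subsets of $\R^d$ in affine coordinates compatible with the affine structure on $F$; the source density $f$ is $C^\infty$ and bounded between positive constants; and by the stabilizer $S_{d+1} \subset G$ of $F$, the target density inherits these properties. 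Caffarelli's interior $C^{2,\alpha}$ theory together with standard bootstrapping then yields strict convexity and $C^\infty$ regularity of $\phi$ on $F^\circ$, while his boundary regularity theorem (using convexity of $F^\vee$) provides a $C^{1,\alpha}$ estimate up to $\partial F$, uniform over all facets by the $G$-symmetry.

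Second, to obtain regularity across the codimension-1 strata, one uses the open-star charts. The affine structure on the open star of a vertex $v$ of $\Delta$ unfolds the $d+1$ facets meeting at $v$ into a single convex open subset $U_v \subset \R^d$, on which $\phi$ becomes a genuine convex function satisfying a real Monge--Amp\`ere equation with $C^\infty$ right-hand side bounded between positive constants. A corresponding unfolding on the dual side identifies the target as a convex open subset of the dual space. Caffarelli's interior theory applied on $U_v$ then yields $C^\infty$ regularity and strict convexity across the codim-1 strata, while his boundary regularity contributes additional uniform $C^{1,\alpha}$ estimates. Covering $A_{\reg}$ by finitely many facet and open-star charts, and using $G$-invariance to ensure compatibility on overlaps, one concludes uniform $C^{1,\alpha}$ regularity on all of $A_{\reg}$.

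The main obstacle I expect is in the second step: rigorously verifying that the open-star chart at $v$ realizes the union of the facets through $v$ as a convex open set of $\R^d$ on which $\phi$ becomes convex, and that the corresponding unfolding on the dual side yields a convex target with density bounded between positive constants---so that Caffarelli's hypotheses are genuinely available. This is a geometric verification that relies on the explicit description of the singular affine structure from~\cite[\S3]{HJMM24} and on the fact that each transposition $(i\,j) \in G$ acts as an affine reflection on $V$ (and dually on $V^\vee$) interchanging the two adjacent facets $F_i$ and $F_j$. Once this geometry is settled, the $C^\infty$, strict convexity, and uniform $C^{1,\alpha}$ conclusions follow by assembling the local Caffarelli estimates via the finite cover.
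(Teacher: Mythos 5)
Your high-level plan — recast $\mu_\phi=\mu$ as an optimal transport problem to $B=\partial\Delta^\vee$, pass to a finite affine atlas on $A_{\reg}$ consisting of facet charts and star charts, and invoke Caffarelli's interior and boundary regularity in each — is indeed the strategy the paper uses. But the geometric picture of what maps to what under $\partial^c\phi$ is inverted in your writeup, and this is precisely the key lemma of the argument, so the gap is substantive.

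You assert that on the interior of a facet $F=\sigma_i$ of $\Delta$, the c-gradient restricts to a transport from $\sigma_i^\circ$ to ``the interior of the dual facet $F^\vee$,'' and that boundary regularity uses ``convexity of $F^\vee$.'' This is not what happens. By \cite[Lemma~4.1]{HJMM24} (generalized to \cref{lem:GradientInclusion} here), facets map to small stars and small stars map to facets: $\partial^c\phi(\sigma_i^\circ)\subset T_i$ (the small star of the vertex $n_i$ of $B$) and $\partial^c\phi(S_i^\circ)\subset\tau_i$ (the facet of $B$ dual to $m_i$). Since a small star is a union of pieces of $d+1$ distinct facets of $B$, its convexity is not automatic — it holds only after applying the specific affine chart $q_{i,j}^{-1}$, and this is exactly the nontrivial verification you flag as ``the main obstacle.'' By naming the target incorrectly (a facet, where convexity is obvious), you bypass the very point that needs checking. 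Relatedly, your Step~2 works with the full open star of a vertex and wants to ``unfold'' both source and target; the paper uses the \emph{small} star $S_i=\{m\in A:\alpha_i(m)\ge\alpha_j(m)\ \forall j\}$, whose chart image $p_{i,j}^{-1}(S_i^\circ)$ is a bounded convex polytope and whose $\partial^c\phi$-image is a single facet $\tau_i^\circ$ — already flat, so no unfolding on the dual side is needed. The full star, by contrast, is sent by $\partial^c\phi$ across several facets of $B$, so it is not the correct domain for the reduction to a transport between convex sets.

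Once the correct facet/star swap is in place, the rest of your outline matches the paper. One small simplification the paper makes: it applies Caffarelli only to the single chart pair $\bigl(p_{i,j}^{-1}(S_i^\circ),\,q_{j,i}^{-1}(\tau_i^\circ)\bigr)$ — a small star of $A$ and a facet of $B$, both convex — and notes (via \cite[Lemma~4.4]{HJMM24}) that $(\phi-n_j)\circ p_{i,j}$ and $\psi\circ q_{j,i}$ are Legendre conjugate there; the facet-of-$A$ picture then comes for free from the dual statement with the roles of $\phi,\psi$ and $A,B$ exchanged, rather than from a second application of Caffarelli.
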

This effectively produces a Monge--Amp\`ere metric outside a set of codimension 2, confirming a widespread expectation related to the SYZ-conjecture. The $C^\infty$ regularity and strict convexity was proved in the work~\cite{AH23} by the first two authors. Note that this goes well beyond local regularity theory for real Monge--Amp\`ere equations, due to well-known counterexamples by Pogorelov (see, for example, \cite{Moo15}).
Using the symmetry in the situation,~\cite{AH23} reduces the $C^\infty$ property
and strict convexity to the inner interior regularity result of Caffarelli~\cite{Caf92a}.
Here we will derive the uniform $C^{1,\alpha}$-regularity (in each of the finitely many charts used to cover $A_{\reg}$) 
in the same way, but using the boundary regularity results of Caffarelli~\cite{Caf92b}. At least in dimension $d=2$, the solution is not $C^{1,1}$, as shown in~\cite{JMPS23}. Note that in the case of compact Hessian manifolds, regularity of weak solutions follows by Caffarelli and Viaclovsky \cite{CV}. We note that the arguments presented for the simplex here also hold for any Weyl polytope, as detailed in \cite{DH} (which was written after this appeared as preprint). 

To explain how Caffarelli's results apply, we need to be more precise about the definition of the Monge--Amp\`ere operator $\phi\mapsto\mu_\phi$. 
Write $B:=\partial\Delta^\vee\subset V^\vee$. We define $\cQ_{\sym}\subset\cQ\subset C^0(B)$ in the same way as $\cP_{\sym}\subset\cP\subset C^0(A)$. The \emph{c-transform} of a bounded function $\phi\colon A\to\R$ is the function $\phi^c\colon B\to\R$ defined by 
\[
\phi^c(y)=\sup_{x\in A}\langle x,y\rangle - \phi(x),
\]
where $\langle\cdot,\cdot\rangle$ denotes the canonical pairing between $V$ and $V^\vee$. The c-transform $\psi^c\colon A\to\R$ of a bounded function $\psi\colon B\to\R$ is defined in the same way. We always have $\phi^{cc}\le\phi$,  
and it turns out that $\phi\in\cP$ iff $\phi$ is \emph{c-convex} in the sense that $\phi^{cc}=\phi$ (resp.\ we have $\psi^{cc}\le\psi$, and $\psi\in\cQ$ iff $\psi^{cc}=\psi$). Moreover, the c-transforms restrict to isometries $\cP\simeq\cQ$ and $\cP_{\sym}\simeq\cQ_{\sym}$. If $\phi\in\cP$ and $x\in A$, then the \emph{c-subgradient} $(\partial^c\phi)(x)$ is the nonempty set of $y\in B$ such that $\phi(x)+\phi^c(y)=\langle x,y\rangle$. When the c-subgradient is a singleton, we instead say \emph{c-gradient}. 
It is proved in~\cite{HJMM24} that if $\phi\in\cP_{\sym}$, then $\partial^c\phi\colon A\to B$ is single-valued Lebesgue a.e.\ and Lebesgue measurable. Any $\psi\in\cQ_{\sym}$ similarly induces $\partial^c\psi\colon B\to A$. The Monge--Amp\`ere operators $\cP_{\sym}\to\cM_{\sym}(A)$ and $\cQ_{\sym}\to\cM_{\sym}(B)$ are now defined by 
\[
\mu_\phi:=(\partial^c\phi)_*\mu_B,
\quad\text{and}\quad
\mu_\psi:=(\partial^c\psi)_*\mu_A,
\]
where $\mu_A$ and $\mu_B$ are the symmetric Lebesgue measures on $A$ and $B$, respectively, normalized to mass one. 
It turns out 
that in suitable coordinates, $\phi$ is the convex potential for the optimal transport map of Lebesgue like measures on convex domains, and thus the regularity results of Caffarelli apply.

\smallskip
In fact, we prove Theorem~A in a more general context. Let $\mu$ and $\nu$ be symmetric probability measures on $A$ and $B$ that are equivalent to Lebesgue measure. Adapting arguments from~\cite{HJMM24}, we show that there exists $\phi\in\cP_{\sym}$, unique up to an additive constant, such that
\begin{equation}\tag{$\star$}
    (\partial^c\phi)_*\mu=\nu
    \quad\text{and}\quad
    (\partial^c\psi)_*\nu=\mu,
\end{equation}
where $\psi=\phi^c$, see Theorem~\ref{thm:transport}. Using Caffarelli's results, we show that if the densities of $\mu$ and $\nu$ are smooth and bounded away from zero and infinity, then $\phi$ and $\psi$ define metrics on $\Lambda_A$ and $\Lambda_B$, respectively, with the same regularity properties as in Theorem~A. In fact, by \cite{JS22}, the $C^{1,\alpha}$ property holds whenever the measures are absolutely continuous and doubling. See Theorem~\ref{thm:breg} for a precise statement and a definition of doubling in this setting. 

\smallskip
We now want to investigate the regularity of $\phi$ and $\psi$ on the singular locus. 

\begin{thmB} \label{thm:cGrad}
Let $\mu$ and $\nu$ be symmetric probability measures on $A$ and $B$ that are absolutely continuous with respect to Lebesgue measure, with densities that are bounded away from zero and infinity (or more generally, densities that are doubling). Assume that $\phi\in\cP_{\sym}$ and $\psi=\phi^c\in\cQ_{\sym}$ satisfy~($\star$). Then the c-gradients of $\phi$ and $\psi:=\phi^c$ exist (i.e.\ are single-valued) at every point of $A$ and $B$, respectively. Moreover, 
\[
 \partial^c\phi\colon A\to B
 \quad\text{and}\quad
 \partial^c\psi\colon B\to A
\]
are H\"older continuous homeomorphisms in the Euclidean metrics that are inverse one to another and map $A_{\reg}$ onto $B_{\reg}$.
\end{thmB}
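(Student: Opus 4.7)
The plan is to propagate the regularity statements of Theorem~A and its generalization Theorem~\ref{thm:breg} from the regular strata $A_{\reg}$, $B_{\reg}$ to the codimension-two singular sets. Those earlier theorems already give single-valuedness, $C^\alpha$-regularity, and the bijection $\partial^c\phi\colon A_{\reg}\to B_{\reg}$ with inverse $\partial^c\psi$; what remains is continuous extension across the singular set and single-valuedness at its points.

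First I would invoke the uniform $C^{1,\alpha}$ estimates supplied by Caffarelli's boundary regularity in the finitely many charts covering $A_{\reg}$. Since these estimates are uniform up to the chart boundary, the restriction of $\partial^c\phi$ to $A_{\reg}$ admits a H\"older continuous extension to the closure of each chart; uniqueness of the continuous extension forces these local extensions to agree on overlaps and patch into a single H\"older continuous map $T\colon A\to B$. A symmetric construction produces $S\colon B\to A$ extending $\partial^c\psi|_{B_{\reg}}$.

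Next I would show $\partial^c\phi(x)=\{T(x)\}$ for every $x\in A$. The inclusion $T(x)\in\partial^c\phi(x)$ is immediate from closedness of the c-subgradient graph (a direct consequence of continuity of $\phi$ and $\phi^c$) by approximating $x\in A_{\sing}$ by points of $A_{\reg}$. For the reverse inclusion, pass to a chart around $x$ in which $\phi$ is represented by a genuinely convex function so that the c-subdifferential agrees with the classical subdifferential. A standard fact in convex analysis states that this subdifferential equals the closed convex hull of limits of $\nabla\phi$ taken at nearby differentiability points; since $\nabla\phi$ extends continuously by the previous step, this hull is a single point. The same argument gives single-valuedness of $\partial^c\psi$ at every point of $B$.

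Finally, on the regular sets $\partial^c\phi$ is already known to be a bijection $A_{\reg}\to B_{\reg}$ with inverse $\partial^c\psi$, so $S\circ T=\mathrm{id}$ and $T\circ S=\mathrm{id}$ hold on the dense open sets $A_{\reg}$ and $B_{\reg}$, and hence everywhere by continuity. Thus $T$ is a H\"older continuous homeomorphism with inverse $S$; bijectivity together with $T(A_{\reg})=B_{\reg}$ forces $T(A_{\sing})=B_{\sing}$. The main technical obstacle is in the second step: one must verify that the singular strata of $A$ sit on the boundary of the convex domain to which Caffarelli's boundary regularity has been applied in each chart, so that the uniform H\"older estimate genuinely extends across them. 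This requires examining the explicit chart structure of $A_{\reg}$ as a union of interiors of $d$-faces with open stars of vertices in the barycentric subdivision, and exploiting the $G$-symmetries to relate overlapping charts.
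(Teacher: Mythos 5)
Your overall strategy---extend $\partial^c\phi$ from the charts to all of $A$ via Caffarelli's boundary estimates and then argue single-valuedness---is reasonable, and the patching step is in fact sound (for $m\in A_{IJ}^\circ$ the open charts $S_i^\circ$ and $\sigma_j^\circ$ with $i\in I$, $j\in J$ overlap arbitrarily close to $m$, so the extensions from different chart closures agree by continuity). However, your proposed mechanism for single-valuedness at $m\in A_{\sing}$ does not work. You invoke the fact that the subdifferential of a convex function is the closed convex hull of limits of gradients at nearby differentiability points; that fact holds at \emph{interior} points of the domain of a convex function, but a singular point of $A$ lies on the \emph{boundary} of every chart domain $p_{i,j}^{-1}(S_i^\circ)$ or $p_{j,i}^{-1}(\sigma_i^\circ)$ whose closure contains it. At such a boundary point the classical subdifferential picks up a normal cone contribution, and---more to the point---the c-subgradient $\partial^c\phi(m)$ is defined by comparison with the entirety of $A$, not merely the piece visible in one chart, so it is not recoverable from limits of $\nabla\phi$ within a single chart. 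A priori $\partial^c\phi(m)$ could contain elements of $B$ lying in some $\tau_{i'}$ (or even in $B_{\sing}$) that never arise as such limits, and nothing in your argument excludes them.

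The paper closes exactly this gap with the permutation symmetry, which is absent from your proposal. \cref{lem:GradientInclusion} (proved by a pure symmetry argument, no regularity) shows $\partial^c\phi(m)\subset\bigcup_{j\in J}T_j\cap\bigcup_{i\in I}\tau_i$ for $m\in A_{IJ}^\circ$. Step~1 then upgrades these unions to intersections: if some $n\in\partial^c\phi(m)$ is not fixed by the stabilizer of $m$, there is a permutation $g$ fixing $m$ with $g(n)\ne n$, and both $n,g(n)\in\partial^c\phi(m)$ lie in a common facet $\tau_i$; the uniform strict convexity of $\psi$ on $\tau_i$ from \cref{thm:breg} then forbids two distinct c-subgradients there. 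Step~2 reuses the same strict-convexity device to get single-valuedness, and only \emph{after} that does the paper identify $\partial^c\phi$ with the chartwise Caffarelli extension (Step~3, via~\cite[Lemma~4.4]{HJMM24}), giving H\"older continuity. So the missing ingredient in your proof is precisely the symmetry-plus-strict-convexity argument of Steps~1--2; without it, Caffarelli boundary regularity alone does not pin down $\partial^c\phi$ on $A_{\sing}$.
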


The fact that the c-gradients exist and are continuous can be seen as a differentiability property which extends to the singular locus, even though there is no obvious smooth structure there. In this sense, it rules out a type of $Y$-shaped singularities studied in \cite{Moo21,MR23}.

That the homeomorphisms $\partial^c\phi$ and $\partial^c\psi$ preserve the regular and singular loci is a consequence of them being equivariant with respect to the action of the symmetry group $G\simeq S_{d+2}$. This equivariance gives more information. For example, in dimension three, the singular loci of $A$ and $B$ consists of 30 line segments each, and these are matched up by $\partial^c\phi$ and $\partial^c\psi$. The endpoints of these intervals come in two flavors, known as positive and negative vertices, and $\partial^c\phi$ and $\partial^c\psi$ take positive vertices to negative vertices, and conversely. See Figure~\ref{fig:signs} and~\cref{rmk:3dvert1} for more information.

The terminology `positive' and `negative' follows~\cite{Joyce}, and is based on the monodromies at a vertex. The vertices represent the most singular part of the conjectural global SYZ fibration on the Fermat family near the large complex structure limit. It has been advocated, in~\cite{LiThreefolds} for example, that to obtain a global version of the SYZ conjecture for threefolds, one should study the behavior of the solution of the real Monge-Amp\`ere equation near the singular set, in particular near the vertices. See also~\cref{rmk:3dvert2}.
\begin{figure}[h]
\begin{tikzpicture}[scale=0.75]
	\coordinate (2) at (0,0);
	\coordinate (4) at (0,3);
	\coordinate (3) at (3,0);
	\coordinate (1) at (-3,-1);
	\coordinate (5) at (2,-2);
	\coordinate (12) at (-1.5,-0.5);
	\coordinate (14) at (-1.5,1);
	\coordinate (15) at (-0.5,-1.5);
	\coordinate (13) at (0,-0.5);
	\coordinate (23) at (1.5,0);
	\coordinate (24) at (0,1.5);
	\coordinate (25) at (1,-1);
	\coordinate (34) at (1.5,1.5);
	\coordinate (35) at (2.5,-1);
	\coordinate (45) at (1,0.5);
	\coordinate (124) at (-1,2/3);
	\coordinate (145) at (-1/3,0);
	\coordinate (125) at (-1/3,-1);
	\coordinate (123) at (0,-1/3);
	\coordinate (135) at (4/3,-1);
	\coordinate (234) at (1,1);
	\coordinate (245) at (2/3,1/3);
	\filldraw (3) circle (1pt);
	\filldraw (4) circle (1pt);
	\filldraw (5) circle (1pt);
	\filldraw (1) circle (1pt);
	\node[right] at (3) {$m_3$};
	\node[above] at (4) {$m_4$};
	\node[below] at (5) {$m_0$};
	\node[left] at (1) {$m_1$};
	\draw[dotted] (2)--(3)--(4)--(2)--(1)--(5)--(3)--(1)--(4)--(5)--(2);
	\draw[thick] (14)--(145)--(15);
	\draw[thick] (45)--(145);
	\node[above] at (145) {\textbf{--}};
	\filldraw (145) circle (1.5pt);
	\fill[orange, fill opacity=0.2] (1) -- (5) -- (4);
\end{tikzpicture}
\begin{tikzpicture}[scale=0.8]
	\coordinate (2) at (0,0);
	\coordinate (4) at (0,3);
	\coordinate (3) at (3,0);
	\coordinate (1) at (-3,-1);
	\coordinate (5) at (2,-2);
	\coordinate (12) at (-1.5,-0.5);
	\coordinate (14) at (-1.5,1);
	\coordinate (15) at (-0.5,-1.5);
	\coordinate (13) at (0,-0.5);
	\coordinate (23) at (1.5,0);
	\coordinate (24) at (0,1.5);
	\coordinate (25) at (1,-1);
	\coordinate (34) at (1.5,1.5);
	\coordinate (35) at (2.5,-1);
	\coordinate (45) at (1,0.5);
	\coordinate (124) at (-1,2/3);
	\coordinate (123) at (0,-1/3);
	\coordinate (135) at (4/3,-1);
	\coordinate (234) at (1,1);
	\coordinate (235) at (5/3,-2/3);
	\coordinate (345) at (5/3,1/3);
	\filldraw (2) circle (1pt);
	\filldraw (3) circle (1pt);
	\filldraw (4) circle (1pt);
	\filldraw (5) circle (1pt);
	\filldraw (1) circle (1pt);
	\node[left] at (2) {$n_2$};
	\node[right] at (3) {$n_3$};
	\node[above] at (4) {$n_4$};
	\node[below] at (5) {$n_0$};
	\node[left] at (1) {$n_1$};
	\draw[dotted] (2)--(3)--(4)--(2)--(1)--(5)--(3)--(1)--(4)--(5)--(2);
	\draw[thick] (235)--(23)--(123);
	\draw[thick] (23)--(234);
	\node[right] at (23) {\textbf{+}};
	\filldraw (23) circle (1.5pt);
	\fill[yellow, fill opacity=0.2] (2) -- (3) -- (4);
	\fill[orange, fill opacity=0.2] (2) -- (3) -- (1);
	\fill[red, fill opacity=0.2] (2) -- (3) -- (5);
\end{tikzpicture}
\caption{The singular locus in dimension $d=3$. The left side (in $A$) depicts a negative vertex, i.e.\ three edges in the singular set meeting at an interior point of a 2-cell. The right side (in $B$) depicts a positive vertex, i.e.\ three edges in the singular set meeting at the midpoint of a 1-cell.}\label{fig:signs}
\end{figure}
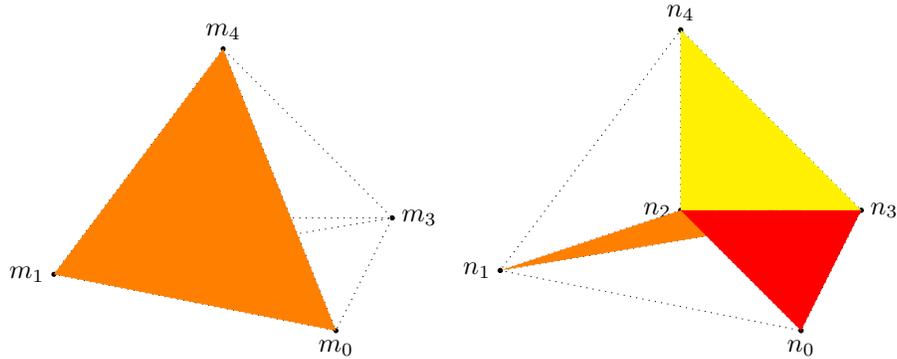

\medskip
Now assume that the densities of the measures in Theorem~B are smooth. As explained above, 
the metrics on $\Lambda_A$ and $\Lambda_B$ defined by $\phi$ and $\psi$ over $A_{\reg}$ and $B_{\reg}$ are smooth and strictly convex, and hence define Riemannian metrics $g_\phi$ and $g_\psi$ there. An easy calculation (see~\cref{lem:isom}) shows that $\partial^c\phi$ and $\partial^c\psi$ induce isometries between $(A_{\reg},g_\phi)$ and $(B_{\reg},g_\psi)$. 

We want to study the metric completions $\overline{(A_{\reg},d_\phi)}$ and $\overline{(B_{\reg},d_\psi)}$ of $(A_{\reg},d_\phi)$ and $(B_{\reg},d_\psi)$, where $d_\phi$ and $d_\psi$ are the distances induced by the Riemannian metrics $g_\phi$ and $g_\psi$. The second completion should, according to a conjecture by Kontsevich and Soibelman~\cite{KS06,GW00}, be the Gromov--Hausdorff limit of certain degenerating families of Calabi--Yau hypersurfaces of complex projective space, as studied in~\cite{HJMM24}. We expect that the metric completions are homeomorphic to $A$ and $B$ equipped with the Euclidean metric. We are not quite able to prove this in general, but we have the following result.
\begin{thmC}\label{thm:MetricCompletion}
Let $\mu$ and $\nu$ be symmetric probability measures on $A$ and $B$ that are absolutely continuous with respect to Lebesgue measure, with densities that are smooth and bounded away from zero and infinity (or, more generally, the densities are smooth and doubling). Assume that $\phi\in\cP_{\sym}$ and $\psi=\phi^c\in\cQ_{\sym}$ satisfy~($\star$). 
Then the identity maps 
\[
(A_{\reg},d_{\mathrm{Eucl}})\to (A_{\reg},d_\phi)
\quad\text{and}\quad
(B_{\reg},d_{\mathrm{Eucl}})\to(B_{\reg},d_\psi)
\]
are H\"older continuous and extend uniquely to H\"older continuous surjective maps 
\[
\chi_A\colon (A,d_{\mathrm{Eucl}})\to\overline{(A_{\reg},d_\phi)}
\quad\text{and}\quad
\chi_B\colon (B,d_{\mathrm{Eucl}})\to\overline{(B_{\reg},d_\psi)}
\]
with connected fibers, satisfying $\chi_A^{-1}(A_{\reg})=A_{\reg}$ and $\chi_B^{-1}(B_{\reg})=B_{\reg}$.
When $d\le2$, these $\chi_A$ and $\chi_B$ are injective, and hence homeomorphisms.
\end{thmC}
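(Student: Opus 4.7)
The first step is to establish Hölder continuity of the identity $(A_{\reg}, d_{\mathrm{Eucl}}) \to (A_{\reg}, d_\phi)$; the analogous argument on the $B$-side then follows by the evident symmetry. For $x, y$ lying in a common convex affine chart of $A_{\reg}$ with the segment $[x,y]$ staying in the chart, integration along the straight line $\gamma(t) = x + t(y - x)$ gives the classical identity
\begin{equation*}
\int_0^1 (y-x)^T D^2\phi(\gamma(t))(y-x)\, dt \;=\; (\nabla\phi(y) - \nabla\phi(x)) \cdot (y-x),
\end{equation*}
so $d_\phi(x,y)^2$ is bounded by the $g_\phi$-length squared of $\gamma$, namely by the right-hand side. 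By \cref{thm:cGrad}, $\nabla\phi = \partial^c\phi$ is Hölder continuous in the Euclidean metrics with some exponent $\alpha > 0$, which yields $d_\phi(x,y) \le C|x-y|^{(1+\alpha)/2}$ for nearby $x, y$. A finite atlas on $A_{\reg}$ together with a Lebesgue-number argument upgrades this to a global Hölder bound (noting that the $d_\phi$-diameter of $A_{\reg}$ is finite, being controlled by a finite sum of chart diameters). Since $A_{\reg}$ is Euclidean-dense in the compact set $A$ and $\overline{(A_{\reg}, d_\phi)}$ is complete, the Hölder identity extends uniquely to a Hölder map $\chi_A \colon (A, d_{\mathrm{Eucl}}) \to \overline{(A_{\reg}, d_\phi)}$, with surjectivity following from continuity on the compact set $A$ combined with density of the image of $A_{\reg}$ in the completion.

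For $\chi_A^{-1}(A_{\reg}) = A_{\reg}$, the inclusion $\supseteq$ is built into the extension. Conversely, if $\chi_A(x_0) = y_0 \in A_{\reg}$, pick a Euclidean-approximating sequence $x_n \in A_{\reg}$ with $x_n \to x_0$, so that $d_\phi(x_n, y_0) \to 0$ in the completion. Because $g_\phi$ is smooth and positive-definite on the open set $A_{\reg}$, there is a Euclidean ball $U$ around $y_0$ contained in $A_{\reg}$ with $g_\phi \ge c\, g_{\mathrm{Eucl}}$ on $U$ for some $c > 0$. A path-length comparison then forces $x_n$ eventually to lie in $U$ and to satisfy $|x_n - y_0| \le c^{-1/2}\, d_\phi(x_n, y_0) \to 0$, so $x_0 = y_0 \in A_{\reg}$.

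The main obstacle is the connected-fibers statement in general dimension, together with full injectivity when $d \le 2$. For connected fibers, I would write $\chi_A^{-1}(z) = \bigcap_{\varepsilon > 0} \chi_A^{-1}(\overline{B_\phi(z,\varepsilon)})$ as a nested intersection of compact subsets of the compact Hausdorff space $A$, and invoke the standard fact that a decreasing intersection of compact connected sets is connected. It thus suffices to prove that each preimage set is Euclidean-connected. Since $d_\phi$ is a length metric on the completion (coming from a Riemannian metric on the dense set $A_{\reg}$), the closed balls $\overline{B_\phi(z,\varepsilon)}$ are path-connected in the target; the nontrivial point is to lift this path-connectedness through $\chi_A$, for which I would exploit $G$-equivariance of the whole construction together with the simplicial cell structure of $A_{\sing}$.

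For injectivity when $d \le 2$, the singular set $A_{\sing}$ is empty ($d = 1$) or consists of the finitely many isolated midpoints of edges of $\Delta$ ($d = 2$); combined with the previous step, it remains only to separate two distinct singular points. Given distinct $m_1, m_2 \in A_{\sing}$, the plan is to interpose a compact ``collar'' $K \subset A_{\reg}$ that separates Euclidean neighborhoods of $m_1$ and $m_2$ while staying at positive Euclidean distance from $A_{\sing}$---available precisely because $A_{\sing}$ is $0$-dimensional---and to use the uniform bound $g_\phi \ge c\, g_{\mathrm{Eucl}}$ on $K$ to force any $A_{\reg}$-path from $x_1$ (near $m_1$) to $x_2$ (near $m_2$) to accrue a definite $d_\phi$-length. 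This yields a uniform positive lower bound on $d_\phi(x_1, x_2)$ and hence $\chi_A(m_1) \ne \chi_A(m_2)$, completing injectivity and therefore the homeomorphism claim.
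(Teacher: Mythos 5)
Your first two paragraphs are essentially correct and track the paper's own argument closely: the Hölder upper bound $d_\phi(x,y)\lesssim d_{\mathrm{Eucl}}(x,y)^{(1+\alpha)/2}$ via Cauchy--Schwarz (you phrase it through Hölder continuity of $\nabla\phi$, the paper through Caffarelli's $C^{1,\alpha}$ estimate directly, but these are the same ingredient), the chain-of-charts upgrade, the extension $\chi_A$, surjectivity from compactness plus density of the image, and $\chi_A^{-1}(A_{\reg})=A_{\reg}$ via local lower Lipschitz control of $g_\phi$ on compact subsets of $A_{\reg}$ (this is the paper's estimate~\eqref{equ:dphimin}). Your collar argument for injectivity when $d\le 2$ is a valid alternative to the paper's route, which instead deduces it directly from connected fibers plus finiteness of $A_{\sing}$; yours is more hands-on but works since a separating annulus in $A\simeq S^2$ can be perturbed to miss the six singular points.

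The genuine gap is in the connected-fibers step. Writing $\chi_A^{-1}(z)=\bigcap_{\varepsilon>0}\chi_A^{-1}(\overline{B_\phi(z,\varepsilon)})$ and invoking the nested-intersection lemma is fine, and the completion is indeed a compact geodesic space so the closed balls $\overline{B_\phi(z,\varepsilon)}$ are path-connected. But you then need $\chi_A^{-1}(\overline{B_\phi(z,\varepsilon)})$ to be connected, and you offer only ``lift path-connectedness through $\chi_A$ by exploiting $G$-equivariance and the cell structure of $A_{\sing}$'' with no actual argument. This is exactly the crux: preimages of connected sets under a continuous surjection need not be connected unless you already know something like connected fibers, so as stated the plan risks circularity, and it is not clear the intermediate claim is even true or how equivariance would establish it. The paper proceeds quite differently: given $x,y$ with $\chi_A(x)=\chi_A(y)$, it picks approximating sequences $x_i,y_i\in A_{\reg}$ and curves $\gamma_i\subset A_{\reg}$ from $x_i$ to $y_i$ with $g_\phi$-length $\ell_i\to 0$, passes to a Hausdorff limit $\Gamma\ni x,y$ of the compact sets $\gamma_i$, proves $\Gamma$ is connected by an elementary topological lemma (\cref{lem:Gamma conn}), and then shows $\Gamma$ lies in the fiber by combining the short-curve bound $\ell_i\to 0$ with the Hölder estimate~\eqref{equ:dphimaj}. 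You would need to supply an argument of comparable substance for the lifting step, or switch to this explicit limiting-curve construction.
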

The crucial point in Theorem~C is that the maps extend to $A$ and $B$. Concretely, this means that if two Cauchy sequences in $A_{\reg}$ are close with respect to the Euclidean distance, then they are close with respect to $d_\phi$. This follows readily from the $C^{1,\alpha}$-estimate in Theorem~A. In order to prove that the maps are homeomorphisms, one would also need the opposite statement, i.e.\ that any two Cauchy sequences in $A_{\reg}$ that are close with respect to $d_\phi$ are close with respect to the Euclidean distance. However, it seems that this does not follow from the regularity result in Theorem~A (see Appendix~\ref{app:ShortCurves} for a related example). It would follow from a global $C^{1,1}$-estimate on $A_{\reg}$, but in light of \cite{JMPS23}, such an estimate might be too much to hope for.

\bigskip
The paper is organized as follows. In~\cref{sec:Setup}, we recall the setup of~\cite{HJMM24}, and also state the Caffarelli (interior and boundary) regularity in a form that we will use. Then, in~\cref{sec:transport}, we prove that the transport problem~($\star$) admits a solution for suitably regular measures, and prove Theorem~A. Sections~\ref{sec:regonsing} and~\ref{sec:completions} contain the proofs of Theorems~B and~C, respectively, and in Appendix~\ref{app:ShortCurves} we make a calculation illustrating a problem of improving Theorem~C in dimension 3.

\subsection*{Acknowledgement}
We thank S\'ebastien Boucksom and Yang Li for fruitful discussions of the material in and surrounding this note. The third author was supported by NSF grant DMS-2154380 and the Simons Foundation. The fourth author was supported by the collaborative research center SFB 1085 \emph{Higher Invariants - Interactions between Arithmetic Geometry and Global Analysis} funded by the Deutsche Forschungsgemeinschaft.

\section{Setup}\label{sec:Setup}
 
We briefly recall the setup and notation in \cite{HJMM24}. 
Fix an integer $d\geq 1$.  Let $M$ be the lattice $\{y\in \Z^{d+2}: \sum_{i=0}^{d+1}y_i = 0\}$ and $N=\Z^{d+2}/\Z(1,\dots,1)$ its dual. Denote by $e_0, e_1,\dots,e_{d+1}$ the elements $(1,0,\dots,0), (0,1,0,\dots,0),\dots,(0,\dots,0,1)$. From now on, let $\Delta\subset M_\R$ be the convex hull of 
\[
m_i:=(d+1)e_i-\sum_{j\neq i} e_j\in M
\]
for $i=0,\dots,d+1$. 
The polar simplex $\Delta^\vee\subset N_\R$ has vertices $n_0=(-1,\dots,0),\dots,n_{d+1}=(0,\dots,-1)$. We may identify the general simplices in the introduction with $\Delta$ and $\Delta^\vee$ using a suitable identification of $V$ and $M_\R$. Denote by $\sigma_i\subset A:=\partial\Delta$ and $\tau_i\subset B:=\partial\Delta^\vee$ the facets that are dual to the vertices $n_i$ and $m_i$, for $i=0,\dots,d+1$, respectively. 

A useful observation is that any point $m\in A$ is of the form $m=\sum_{i=0}^{d+1} \alpha_k m_k$ for real numbers $\alpha_k$. Furthermore, these numbers are uniquely determined when required to satisfy $\sum_{k=0}^{d+1} \alpha_k=1$ and $\min \alpha_k = 0$. 
Given $m$ in $A$ we will use $\alpha_0(m),\ldots, \alpha_{d+1}(m)$ to denote these numbers. This allows us to define the following subsets $S_i$ of $A$, which we will refer to as the \emph{small stars}. Let 
\[
S_i=\{m\in A:\alpha_i(m)\geq \alpha_j(m)\forall j\}
\]
for $i=0,\dots,d+1$. These sets are closed neighborhoods of the corresponding vertex $m_i$, and subsets of the corresponding closed star $\mathrm{Star}(m_i):=\bigcup_{j\neq i}\sigma_j$ of $m_i$ consisting of all facets of $\Delta$ containing $m_i$. Denote by $T_i$ the similarly defined subsets of $B$.

On $A=\partial \Delta$, and in a similar manner for $B=\partial \Delta^\vee$, we define a singular special affine structure in the following way. First define maps $M_\R\rightarrow \R^d$ for $i\neq j$ given by 
\[
m\mapsto (\langle m, n_j-n_k\rangle)_{k\neq i,j}.
\]
These maps restrict to bijections on $\Star(m_i)$ that we will refer to as $p_{i,j}^{-1}$. 
The atlas making up the singular special affine structure consists of two types of charts, namely $(S_i^\circ,p_{i,j}^{-1})$ for $j\ne i$, and 
$(\sigma_i^\circ,p_{j,i}^{-1})$ for $j\neq i$.
As mentioned in the introduction, this atlas covers a set that we will refer to as $A_{\reg}$. On $B$, there is an analogous construction of an atlas covering a set $B_{\reg}$. 
We will refer to the corresponding charts as $(\tau_i^\circ,q_{j,i}^{-1})$ and $(T_i^\circ,q_{i,j}^{-1})$, respectively. 
For the fact that these make up special affine atlases, i.e.\ that the induced transition functions on $\R^d$ are affine and volume preserving, see~\cite[\S2.1]{HJMM24}. 

As in the introduction, we have the classes of symmetric c-convex functions $\cP_{\sym}\subset C^0(A)$ and  $\cQ_{\sym}\subset C^0(B)$; see~\cite[\S3]{HJMM24}. The c-transform defines isometries between $\cP_{\sym}$ and $\cQ_{\sym}$, and becomes the Legendre transform in the charts above. More precisely, consider $\phi\in\cP_{\sym}$ and $\psi\in\cQ_{\sym}$ with $\psi=\phi^c$. For any index $i$, the c-subgradient $\partial^c\phi$ maps $S_i^\circ$ into $\tau_i$ and $\sigma_i^\circ$ into $T_i$. Moreover, for any $j\ne i$, the function $(\psi - m_j)\circ q_{i,j}$ on $q_{i,j}^{-1}(T_i^\circ)$ is convex, with subgradient image contained in $p_{j,i}^{-1}(\sigma_i)$, and its Legendre transform is the convex function $\phi\circ p_{j,i}$ on $p_{j,i}^{-1}(\sigma_i)$. Similarly, $(\phi-n_j)\circ p_{i,j}$ is convex on $p_{i,j}^{-1}(T_i^\circ)$, with Legendre transform given by the convex function $\psi\circ q_{j,i}$ on $q_{j,i}^{-1}(\tau_i)$. 

As explained in~\cite[\S3.4]{HJMM24}, there is a natural principal $\R$-bundle $\Lambda_A$ on $A$ whose restriction to $A_{\reg}$ is an affine $\R$-bundle. Any function $\phi\in\cP_{\sym}$ induces a continuous metric on $\Lambda_A$ that is convex over on $A_{\reg}$, and we can talk about this metric being smooth or strictly convex. This terminology is convenient, and simply means that the functions $(\phi-n_j)\circ p_{i,j}$ and $\phi\circ p_{j,i}$ are smooth and strictly convex on $S_i^\circ$ and $\sigma_i^\circ$, respectively, for any $i\ne j$. We similarly have a principal $\R$-bundle $\Lambda_B$ on $B$, with analogous properties.

We will say that an absolutely continuous measure $\mu$ on $A$ is doubling if its restriction to open faces and small stars is doubling. More precisely, there should be a constant $C$ such that if $\Omega$ is a convex subset of $p_{i,j}^{-1}(\sigma_j^\circ)\subset \R^d, i\not=j$, 
with center of mass $x$, then 
\[
\mu(\Omega) \leq C\mu\left(x+(\Omega-x)/2\right),
\]
where $x+(\Omega-x)/2$ is the dilation of $\Omega$ by $1/2$ with respect to its center of mass, and similarly for convex subset of $p_{i,j}^{-1}(S_i^\circ)\subset \R^d, i\not=j$.

We will use the same terminology for absolutely continuous measures on $B$.

\medskip
Our analysis relies in a crucial way on the following result by Caffarelli, generalized to doubling measures by Jhaveri and Savin. 
\begin{thm}\label{thm:Caf}
    Let $U$ and $V$ be bounded convex domains in $\R^d$, and $\mu$, $\nu$ probability measures on $U$, $V$ with densities (with respect to Lebesgue measure) bounded away from zero and infinity (or more generally, densities that are doubling). Then there exist differentiable, strictly convex functions $u\colon U\to\R$ and $v\colon V\to\R$, such that $u$ is the Legendre transform of $v$, and 
    \begin{equation}\label{equ:Brenier}
        (\partial u)_*\mu=\nu
        \quad\text{and}\quad
        (\partial v)_*\nu=\mu.
    \end{equation}
    Moreover, $u$ and $v$ are uniformly differentiable and strictly convex in the following sense. There exist constants $\alpha\in(0,1)$, $\beta>0$, and $C>1$ such that 
    \begin{align}
    C^{-1}|x'-x|^{1+\beta}&\le u(x')-u(x)-\partial u(x)(x'-x)\le C|x'-x|^{1+\alpha}\label{equ:Caf1}\\
    C^{-1}|y'-y|^{1+\beta}&\le v(y')-v(y)-\partial v(y)(y'-y)\le C|y'-y|^{1+\alpha}\label{equ:Caf2}
    \end{align}
    for $x,x'\in U$, $y,y'\in V$.
    The pair $(u,v)$ is unique up to replacement by $(u-c,v+c)$ for a constant $c\in\R$. 
    Moreover, if the densities of $\mu$ and $\nu$ are smooth, so are $u$ and $v$.
\end{thm}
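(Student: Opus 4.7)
The plan is to reduce to the classical Brenier optimal transport problem with quadratic cost and then invoke the Caffarelli regularity theory, extended to doubling densities by Jhaveri--Savin. Since $-\langle x,y\rangle=\tfrac12|x-y|^2-\tfrac12|x|^2-\tfrac12|y|^2$, the $c$-convex functions for the cost $c(x,y)=-\langle x,y\rangle$ are ordinary convex functions, and the corresponding optimal transport is Brenier's. First I would apply Brenier's theorem on $U$ to produce a convex $u\colon U\to\R$, unique up to an additive constant, with $(\nabla u)_*\mu=\nu$. Taking $v$ to be the restriction to $V$ of the Legendre transform $u^*$ gives a convex function satisfying $(\nabla v)_*\nu=\mu$ and $u=v^*|_U$; this establishes existence, uniqueness up to constants, and \eqref{equ:Brenier}.

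Next I would deduce the right-hand (upper) bounds in \eqref{equ:Caf1}--\eqref{equ:Caf2} from global $C^{1,\alpha}$ regularity of $u$ and $v$ up to the boundary. Since $U$ and $V$ are bounded convex domains and the densities are bounded between two positive constants (or more generally doubling), Caffarelli's boundary regularity theorem \cite{Caf92b}, in the doubling case the extension of Jhaveri--Savin \cite{JS22}, yields $u\in C^{1,\alpha}(\overline U)$ and $v\in C^{1,\alpha}(\overline V)$ for some $\alpha\in(0,1)$. Integrating $|\nabla u(x')-\nabla u(x)|\le C|x-x'|^\alpha$ along the segment from $x$ to $x'$ gives the upper bound in \eqref{equ:Caf1}, and symmetrically for \eqref{equ:Caf2}. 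The $C^\infty$ conclusion when the densities are smooth then follows by bootstrapping $C^{1,\alpha}$ via Evans--Krylov to $C^{2,\alpha}$ and then via Schauder to $C^\infty$.

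The main obstacle is the left-hand (lower) inequality, which is a quantitative modulus of strict convexity. The crucial algebraic input is the Young-equality identity
\[
u(x')-u(x)-\nabla u(x)\cdot(x'-x)=v(y)-v(y')-\nabla v(y')\cdot(y-y'),
\]
valid whenever $y=\nabla u(x)$ and $y'=\nabla u(x')$, which expresses the Bregman divergence of $u$ at $(x,x')$ as that of $v$ at the conjugate pair $(y',y)$. Caffarelli's interior strict convexity \cite{Caf92a}, implemented via the John-ellipsoid normalization of the sections of $u$, combined with the $C^{1,\alpha}$ estimate on $v$ and the reverse Hölder bound $|y-y'|\ge c|x-x'|^{1/\alpha}$ obtained by inverting $v\in C^{1,\alpha}$, upgrades the mere positivity of the Bregman divergence to the polynomial lower bound $\ge C^{-1}|x-x'|^{1+\beta}$ for some $\beta>0$; the bound for $v$ follows by symmetry. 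This quantitative upgrade of strict convexity is the delicate point of the argument, and it relies crucially on the convexity of $U$ and $V$, since that is precisely the geometric hypothesis ensuring $C^{1,\alpha}$ regularity and section normalization up to the boundary for both potentials.
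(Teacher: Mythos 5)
Your overall strategy is the same as the paper's: the theorem is a package of known results, and the paper likewise does not reprove it but attributes existence/uniqueness to Brenier, the two‑sided estimates~\eqref{equ:Caf1}--\eqref{equ:Caf2} to Caffarelli's interior and boundary regularity papers (with \cite{JS22} supplying the doubling case), and higher regularity to the standard bootstrap. Your reduction to quadratic cost, the application of Brenier's theorem, the derivation of the upper bounds by integrating the $C^{0,\alpha}$ modulus of $\nabla u$ and $\nabla v$, and the citation of \cite{JS22} are all in line with the paper.

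However, your argument for the lower bounds (the quantitative strict convexity) has a genuine gap. The Bregman identity $u(x')-u(x)-\nabla u(x)\cdot(x'-x)=v(y)-v(y')-\nabla v(y')\cdot(y-y')$, with $y=\nabla u(x)$, $y'=\nabla u(x')$, rewrites the left side as $D_v(y,y')$; the reverse H\"older bound then gives $|y-y'|\ge c|x-x'|^{1/\alpha}$. But to conclude $D_u(x',x)\ge c|x-x'|^{1+\beta}$ one would still need a \emph{lower} bound $D_v(y,y')\gtrsim |y-y'|^{1+\beta'}$, which is precisely the strict convexity of $v$ that is being proved -- the argument is circular. Any attempt to extract the lower bound from the variational formula $D_u(x',x)=\sup_{\tilde y}\bigl[(x'-x)\cdot(\tilde y-y)-D_v(\tilde y,y)\bigr]$ by choosing $\tilde y$ along the ray from $y$ in direction $x'-x$ runs into the constraint that $\tilde y$ must stay in $\overline V$ for the $C^{1,\alpha}$ bound on $v$ to apply, and near $\partial V$ that ray may leave $\overline V$ immediately; restricting $\tilde y$ to the admissible segment $[y,y']$ again only reproduces the upper bound. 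Citing the \emph{interior} strict convexity of \cite{Caf92a} also does not close the gap, since its constants degenerate at $\partial U$. In fact the logical order in \cite{Caf92b} is the opposite of the one you propose: Caffarelli first establishes strict convexity up to the boundary (via Alexandrov‑type barriers and section normalization), and then derives $C^{1,\alpha}(\overline U)$ from it. So the lower bounds in~\eqref{equ:Caf1}--\eqref{equ:Caf2} must be taken as a direct output of \cite{Caf92b} (resp.\ \cite{JS22}), not as a consequence of the $C^{1,\alpha}$ bound on the conjugate potential. A minor secondary point: Evans--Krylov bootstraps from $C^{1,1}$ (uniform ellipticity), not from $C^{1,\alpha}$; the passage from $C^{1,\alpha}$ plus strict convexity to $C^{2,\alpha}$ for Monge--Amp\`ere is Caffarelli's interior $W^{2,p}$/$C^{2,\alpha}$ theory, after which Schauder gives $C^\infty$.
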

Note that the existence and uniqueness (up to a constant) of a convex function $u$ such that~\eqref{equ:Brenier} holds (with $v$ the Legendre transform of $u$) is due to Brenier~\cite{Bre91} and true for more general domains $U$ and $V$. 
The regularity properties of $u$ and $v$ when $U$ and $V$ are convex follow from a combination of the main results in~\cite{Caf92a,Caf92b}, see~\cite[\S1]{AC16} for a discussion and~\cite{JS22} for the doubling case. It follows formally that $u$ and $v$ extend uniquely as continuous convex functions to the closures of $U$ and $V$, and that~\eqref{equ:Caf1},~\eqref{equ:Caf2} continue to hold. Theorem~\ref{thm:Caf} is therefore referred to as \emph{boundary $C^{1,\alpha}$ regularity}.
%
%
%
%
\section{An optimal transport problem}\label{sec:transport}
In this section, we prove Theorem~A. As indicated in the introduction, we in fact consider a more general situation involving optimal transport between two measures.

The c-gradient of any $\phi\in\cP_{\sym}$ defines a  Lebesgue measurable map from $A$ to $B$, so if $\mu\in\cM_{\sym}(A)$ is absolutely continuous with respect to Lebesgue measure $\mu_A$, written $\mu\ll\mu_A$, then $(\partial^c\phi)_*\mu$ is a well-defined symmetric probability measure on $B$. Similarly, if $\nu\in\cM_{\sym}(B)$ satisfies $\nu\ll\mu_B$, then $(\partial^c\psi)_*\nu$ is a well-defined symmetric probability measure on $A$ for any $\psi\in\cQ_{\sym}$.

\begin{thm}\label{thm:transport}
    Assume $\mu\in\cM_{\sym}(A)$ and $\nu\in\cM_{\sym}(B)$ are equivalent to Lebesgue measure $\mu_A$ and $\mu_B$, respectively. Then there exists $\phi\in\cP_{\sym}$, unique up to an additive constant, such that if we set $\psi=\phi^c\in\cQ_{\sym}$, then
    \begin{equation}\label{equ:transports}
    (\partial^c\phi)_*\mu=\nu
    \quad\text{and}\quad
     (\partial^c\psi)_*\nu=\mu
    \end{equation}
\end{thm}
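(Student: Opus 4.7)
The plan is to adapt the Kantorovich-type variational approach of~\cite{HJMM24}, which handled the special case $(\mu,\nu)=(\mu_A,\mu_B)$. Define the functional
\[
K(\phi) = \int_A \phi\, d\mu + \int_B \phi^c\, d\nu
\]
on $\cP_{\sym}$. It descends to $\cP_{\sym}/\R$ because $\mu$ and $\nu$ are probability measures, and the Fenchel inequality $\phi(x)+\phi^c(y)\ge\langle x,y\rangle$ bounds it below. Elements of $\cP_{\sym}$ are uniformly Lipschitz (they differ from $\max_{\ell\in\Delta^\vee}\ell|_A$ by a bounded term), so after the normalization $\int_A \phi\,d\mu=0$, sublevel sets of $K$ are equicontinuous. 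Continuity of $\phi\mapsto\phi^c$ on $\cP_{\sym}$ from~\cite{HJMM24}, combined with Arzel\`a--Ascoli, produces a minimizer $\phi^*\in\cP_{\sym}$.

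For the optimality equations, observe that $K$ is convex in $\phi$ (since $\int_B \phi^c \, d\nu$ is a supremum of linear functionals in $\phi$), and compute a first variation along the line $\phi_t=(1-t)\phi^*+t\phi_1$, for $\phi_1\in\cP_{\sym}$: an envelope theorem computation yields
\[
\frac{d}{dt}\bigg|_{t=0^+}K(\phi_t) = \int_A (\phi_1-\phi^*)\, d\mu - \int_A (\phi_1-\phi^*)\, d\bigl((\partial^c\psi^*)_*\nu\bigr) \ge 0,
\]
where $\psi^*=(\phi^*)^c$. The envelope step relies on $\partial^c\psi^*$ being single-valued $\nu$-a.e.; this is $\mu_B$-a.e.\ by~\cite{HJMM24}, and the hypothesis $\nu\ll\mu_B$ transfers the statement. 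Varying $\phi_1$ and exploiting symmetry of both measures $\mu$ and $(\partial^c\psi^*)_*\nu$ forces $(\partial^c\psi^*)_*\nu=\mu$; the dual identity $(\partial^c\phi^*)_*\mu=\nu$ follows by swapping $A\leftrightarrow B$, $\mu\leftrightarrow\nu$, $\phi\leftrightarrow\psi$.

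For uniqueness modulo constants, if $\phi_1,\phi_2\in\cP_{\sym}$ both satisfy~\eqref{equ:transports}, I would pass to the affine charts of Section~\ref{sec:Setup}. In each chart, $\phi_k$ becomes a convex potential for a Brenier optimal transport between measures equivalent to Lebesgue on bounded convex Euclidean domains, so Brenier's uniqueness theorem forces $\phi_1-\phi_2$ to be constant on each chart. Connectivity of $A_{\reg}$ together with continuity of $\phi_1-\phi_2$ on $A$ then produces a single global constant.

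The principal obstacle is justifying the envelope-type formula, since~\cite{HJMM24} established it specifically for the Lebesgue measure $\mu_B$; the standing assumption $\nu\ll\mu_B$ is precisely what is needed to import the argument, because the formula ultimately depends only on $\nu$-a.e.\ properties of $\partial^c\psi^*$. A secondary point is that variations along lines $\phi_t=(1-t)\phi^*+t\phi_1$ only explore a convex cone of directions, but since both $\mu$ and $(\partial^c\psi^*)_*\nu$ are symmetric and the differences $\phi_1-\phi^*$ with $\phi_1\in\cP_{\sym}$ separate symmetric measures on $A$, this suffices to conclude equality of the two measures.
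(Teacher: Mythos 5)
Your proposal follows essentially the same variational route as the paper: minimize the Kantorovich-type functional $F(\phi)=\int_A\phi\,d\mu+\int_B\phi^c\,d\nu$ on $\cP_{\sym}$, and extract the transport equations from the minimizer. The paper's actual proof is a short citation proof, adapting the existence/uniqueness of the minimizer from \cite[Theorem~5.2]{HJMM24} and the equivalence ``minimizer $\Leftrightarrow$ transport equations hold'' from \cite[Lemma~5.1]{HJMM24}, observing that both only use equivalence with Lebesgue measure.

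The one genuine divergence is the uniqueness step. The paper gets uniqueness-up-to-constants as part of the minimizer argument in HJMM24, whereas you localize to the affine charts and invoke Brenier's uniqueness theorem plus connectedness of $A_{\reg}$. This works and is a reasonable alternative: the geometry set up in Section~\ref{sec:Setup} (c-transform becoming the Legendre transform in charts, $\partial^c\phi$ sending $S_i^\circ\to\tau_i$ and $\sigma_i^\circ\to T_i$) ensures that any solution to \eqref{equ:transports} restricts to a Brenier potential in each chart for a fixed, $\phi$-independent source/target pair, so Brenier uniqueness gives $\phi_1-\phi_2$ locally constant, and connectedness of $A_{\reg}$ plus continuity of $\phi_1-\phi_2$ on $A$ globalizes the constant. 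This trades the HJMM24 functional-analytic argument for the boundary regularity machinery the rest of the paper is built on.

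The weakest point of your sketch is the optimality-conditions step. The envelope computation is right (and you are correct that $\nu\ll\mu_B$ is exactly what is needed to make $\partial^c\psi^*$ single-valued $\nu$-a.e.), but the passage from $\int_A(\phi_1-\phi^*)\,d\bigl(\mu-(\partial^c\psi^*)_*\nu\bigr)\ge 0$ for all $\phi_1\in\cP_{\sym}$ to $\mu=(\partial^c\psi^*)_*\nu$ is not automatic. The set $\{\phi_1-\phi^*:\phi_1\in\cP_{\sym}\}$ is only a convex set containing $0$, not a linear space; a perturbation $\phi^*+\epsilon h$ for generic symmetric $h$ leaves $\cP_{\sym}$ and must be replaced by its c-convexification, with an accompanying estimate on the error. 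Your remark that such differences ``separate symmetric measures'' is plausible but is asserted rather than proved; this is precisely the content of \cite[Lemma~5.1]{HJMM24}, and you should either work this out or cite it explicitly, as the paper does.
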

This is a version of~\cite[Theorem~5.2]{HJMM24}, where $\mu=\mu_A$ is Lebesgue measure on $A$, $\nu\in\cM_{\sym}(B)$ is an arbitrary measure, and only the first equation in~\eqref{equ:transports} is considered.
\begin{proof}
Define a functional $F\colon\cP_{\sym}\to\R$ by 
    \[
    F(\phi)=\int_A\phi\,d\mu+\int_B\phi^c\,d\nu.
    \]
    Arguing as in the proof of~\cite[Theorem~5.2]{HJMM24}, one shows that $F$ admits a minimizer, unique up to an additive constant. Further, the reasoning used in the proof of~Lemma~5.1 in loc.~cit.\ shows that $\phi$ is a minimizer for $F$ iff $\phi$ and $\psi:=\phi^c$ satisfy~\eqref{equ:transports}; these proofs only use that $\mu$ and $\nu$ are equivalent to Lebesgue measure.
\end{proof}

We are now ready to state a generalization of Theorem~A in the introduction. 
\begin{thm}\label{thm:breg}
    Assume that $\mu\in\cM_{\sym}(A)$ and $\nu\in\cM_{\sym}(B)$ are equivalent to Lebesgue measure, with densities bounded away from zero and infinity (or more generally, densities that are doubling). Let $\phi\in\cP_{\sym}$ be such that $\phi$ and $\psi:=\phi^c\in\cQ_{\sym}$ satisfy~\eqref{equ:transports}. Then:
    \begin{itemize}
        \item[(i)] 
            the metric on $\Lambda_A$ defined by $\phi$ is uniformly strictly convex, and uniformly $C^{1,\alpha}$ on $A_{\reg}$, in the sense that, for any  distinct indices $i$ and $j$, the functions $(\phi-n_j)\circ p_{i,j}$ on $p_{i,j}^{-1}(S_i^\circ)\subset\R^d$ and $\phi\circ p_{j,i}$ on $p_{j,i}^{-1}(\sigma_i^\circ)\subset\R^d$ satisfy inequalities as in~\eqref{equ:Caf1} and~\eqref{equ:Caf2}; similarly, the metric on $\Lambda_B$ defined by $\psi$ is uniformly strictly convex, and uniformly $C^{1,\alpha}$ on $B_{\reg}$;
        \item[(ii)]
            if the densities of $\mu$ and $\nu$ are $C^\infty$ on $A_{\reg}$ and $B_{\reg}$, respectively, then the metrics on $\Lambda_A$ and $\Lambda_B$ defined by $\phi$ and $\psi$ are $C^\infty$ on $A_{\reg}$ and $B_{\reg}$, respectively,  in the sense that all the functions in~(i) are $C^\infty$.
    \end{itemize}
\end{thm}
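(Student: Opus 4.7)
The plan is to reduce the claim, chart by chart, to the Brenier--Caffarelli regularity theorem~\cref{thm:Caf}, exploiting the fact that in each special affine chart the c-transform becomes the ordinary Legendre transform and the c-subgradient becomes the Euclidean subgradient.

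Fix distinct indices $i\neq j$. In the chart $(\sigma_i^\circ, p_{j,i}^{-1})$ on $A$ and the chart $(T_i^\circ, q_{i,j}^{-1})$ on $B$, the images $U:=p_{j,i}^{-1}(\sigma_i^\circ)$ and $V:=q_{i,j}^{-1}(T_i^\circ)$ are bounded convex open domains in $\R^d$, and by the discussion in~\cref{sec:Setup} the functions
\[
u:=\phi\circ p_{j,i}\quad\text{on } U,\qquad v:=(\psi-m_j)\circ q_{i,j}\quad\text{on } V,
\]
are convex and Legendre dual to one another. Because the atlas is volume preserving, the push-forwards $\tilde\mu:=(p_{j,i}^{-1})_*(\mu|_{\sigma_i^\circ})$ and $\tilde\nu:=(q_{i,j}^{-1})_*(\nu|_{T_i^\circ})$ have Lebesgue densities that inherit the two-sided bounds (resp.\ the doubling property) of the densities of $\mu$ and $\nu$. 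From the inclusion $\partial^c\phi(\sigma_i^\circ)\subset T_i$ and the analogous inclusion for $\partial^c\psi$, combined with $(\partial^c\phi)_*\mu=\nu$ from~\eqref{equ:transports}, one gets $(\partial^c\phi)_*(\mu|_{\sigma_i^\circ})=\nu|_{T_i^\circ}$ up to null sets, and the chart identifications translate this into the Brenier equation $(\partial u)_*\tilde\mu=\tilde\nu$ on $V$. Uniqueness in~\cref{thm:Caf} then identifies $u$ with Caffarelli's Brenier potential up to an additive constant, so the uniform strict convexity and uniform $C^{1,\alpha}$ estimates~\eqref{equ:Caf1}--\eqref{equ:Caf2} hold for $u$; by Legendre duality they hold for $v$ as well, hence for $\psi$ on $T_i^\circ$ after subtracting the smooth linear term $m_j$.

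A parallel argument in the charts $(S_i^\circ, p_{i,j}^{-1})$ on $A$ and $(\tau_i^\circ, q_{j,i}^{-1})$ on $B$, using the restricted transport $(\partial^c\phi)_*(\mu|_{S_i^\circ})=\nu|_{\tau_i^\circ}$, handles the contribution of the small stars in $A$ and the facets in $B$. Since these two families of charts together cover $A_{\reg}$ and $B_{\reg}$, part~(i) follows. For part~(ii), if the densities of $\mu$ and $\nu$ are $C^\infty$ on $A_{\reg}$ and $B_{\reg}$, so are the pulled-back densities $\tilde\mu$ and $\tilde\nu$ on the open convex domains $U$ and $V$, and the smoothness clause of~\cref{thm:Caf}---i.e., the interior regularity of Caffarelli~\cite{Caf92a}---makes $u$ and $v$ smooth; this transfers through the affine charts to $C^\infty$ regularity of $\phi$ on $\sigma_i^\circ$ and $\psi$ on $T_i^\circ$, with the analogous statement on small stars. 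The only real work is the bookkeeping just outlined; the main obstacle, if any, is being careful with the translation between c-convexity/c-subgradient and Legendre/Brenier in coordinates, rather than any new analytic estimate.
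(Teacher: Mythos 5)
Your proof is correct and follows essentially the same route as the paper: in each special affine chart you identify the restricted c-transform with the Legendre transform, push the restricted measures forward to densities bounded away from $0$ and $\infty$ (or doubling) on bounded convex domains, verify the Brenier equation in coordinates, and invoke~\cref{thm:Caf}. The paper treats the chart pair (small star on $A$, facet on $B$) and leaves the dual pair implicit, whereas you spell out both pairs and explicitly invoke uniqueness of the Brenier potential; these are cosmetic differences, not a different argument.
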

\begin{proof}
Pick any two distinct indices $i$ and $j$. The sets $U:=p_{i,j}^{-1}(S_i^\circ)$ and $V:=q_{j,i}^{-1}(\tau_i^\circ)$ are bounded convex subsets of $\R^d$. Further, the measures $\tilde\mu:=(p_{i,j}^{-1})_*(\mu|_{S_i^\circ})$ and $\tilde\nu:=(q_{j,i}^{-1})_*(\nu|_{\tau_i^\circ})$ on $U$ and $V$ have densities (with respect to Lebesgue measure) bounded away from 0 and infinity. Finally, the convex functions $g:=(\phi-n_j)\circ p_{i,j}$ on $U$ and $\psi\circ q_{j,i}$ on $V$ are Legendre conjugate, and satisfy $(\partial g)_*\tilde\mu=\tilde\mu$, $(\partial h)_*\tilde\nu=\tilde\nu$, as follows from~\eqref{equ:transports} and~\cite[Lemma~4.4]{HJMM24}.

The statements in~(i)--(ii) now follows from Caffarelli's boundary regularity as in Theorem~\ref{thm:Caf}.
\end{proof}
%
%
%
%
\section{Regularity on the singular set: proof of Theorem~B}\label{sec:regonsing}
The idea of the proof of Theorem~B is to use Caffarelli's boundary regularity result in the form of Theorem~\ref{thm:breg}. This gives strict convexity and differentiability in charts, up to the boundary. To fully treat the c-transform on the singular set, we need a few more arguments involving the symmetry. 
To this end, we introduce the following notation.
Given disjoint (but possibly empty) subsets $I,J\subset\{0,1,\dots,d+1\}$, we set
\[
A_{IJ}:=\bigcap_{i\in I}S_i\cap\bigcap_{j\in J}\sigma_j
\qand
B_{JI}:=\bigcap_{j\in J}T_j\cap\bigcap_{i\in I}\tau_i,
\]
These are compact subsets of $A$ and $B$, respectively. 
Additionally, set
\[
A_{IJ}^\circ:=A_{IJ}\setminus\left(\bigcup_{l\not\in I}S_l\cup\bigcup_{l\not\in J}\sigma_l\right)
\qand
B_{JI}^\circ:=B_{JI}\setminus\left(\bigcup_{l\not\in J}T_l\cup\bigcup_{l\not\in I}\tau_l\right).
\]
The locally closed sets $A_{IJ}^\circ$ and $B_{JI}^\circ$, which are empty whenever $I$ or $J$ is empty, form partitions of $A$ and $B$. 
The points in $A_{IJ}^\circ$ are of the form $\sum_l\alpha_lm_l$, where $\sum_l\alpha_l=1$, $\min_l\alpha_l=0$, $\alpha_l<\alpha_i=\alpha_{i'}$ for $i,i'\in I$, $l\not\in I$, and $0=\alpha_j<\alpha_l$ for $j\in J$, $l\not\in J$. We can similarly describe the points in $B_{JI}^\circ$.

Note that $A_{\sing}$ is the union of all $A_{IJ}$ with $|I|,|J|\ge2$, and similarly for $B_{\sing}$.
The stabilizer of any point in $A_{IJ}^\circ$ or $B_{JI}^\circ$ is the product of the groups $G_I$ and $G_J$ of permutations of $I$ and $J$.

We first prove a lemma that is valid for arbitrary symmetric c-convex functions.
\begin{lem}\label{lem:GradientInclusion}
    Pick any $\phi\in\cP_{\sym}$.
    \begin{itemize}
        \item[(i)]
            For any disjoint nonempty subsets $I$, $J$ of $\{0,1,\dots,d+1\}$ we have 
            \[(\partial^c\phi)(A_{IJ}^\circ)\subset\bigcup_{j\in J}T_j\cap\bigcup_{i\in I}\tau_i.
            \]
        \item[(ii)] 
            For any $i\in I$ we have:
            \begin{itemize}
                \item[(a)]
                    $(\partial^c\phi)(m)\cap\tau_i\ne\emptyset$ when $m\in S_i$;
                \item[(b)]  
                $(\partial^c\phi)(m)\cap T_i\ne\emptyset$ when $m\in\sigma_i$.
            \end{itemize}
    \end{itemize}
    The analogous results hold for any $\psi\in\cQ_{\sym}$.
\end{lem}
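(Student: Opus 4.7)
The heart of the argument is a \emph{monotonicity lemma}: for every $m\in A$, every $\bar y\in(\partial^c\phi)(m)$, and every pair of indices $i,l$,
\[
\bigl(\alpha_i(m)-\alpha_l(m)\bigr)\bigl(\beta_i(\bar y)-\beta_l(\bar y)\bigr)\leq 0,
\]
where $\beta_k$ are the barycentric coordinates on $B$ analogous to the $\alpha_k$ on $A$. In words, the barycentric coordinates of $m$ and $\bar y$ are oppositely ordered. To prove this, I would apply the transposition $g=(i,l)\in S_{d+2}$ to $\bar y$. Since $\phi^c$ is $G$-invariant, $\phi^c(g\cdot\bar y)=\phi^c(\bar y)$, while the maximizing property of $\bar y$ gives $\langle m,y\rangle-\phi^c(y)\leq -\phi(m)$ for every $y\in B$, with equality at $y=\bar y$. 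Substituting $y=g\cdot\bar y$ yields $\langle m,g\cdot\bar y-\bar y\rangle\leq 0$. A direct barycentric computation then shows $g\cdot\bar y-\bar y=(\beta_l-\beta_i)(n_i-n_l)$ and $\langle m,n_i-n_l\rangle=(d+2)(\alpha_l-\alpha_i)$, giving the inequality.

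The monotonicity lemma immediately implies (i). For $m\in A_{IJ}^\circ$, every index $i\in I$ satisfies $\alpha_i(m)>\alpha_l(m)$ for all $l\notin I$, so the lemma forces $\beta_i(\bar y)\leq\beta_l(\bar y)$ for $l\notin I$. In particular, the minimum $0=\min_k\beta_k(\bar y)$ is attained at some index in $I$, so $\bar y\in\tau_{i}$ for that $i$. A symmetric argument with $j\in J$, where $\alpha_j(m)=0<\alpha_l(m)$ for $l\notin J$, shows that the maximum of $\beta_k(\bar y)$ is attained within $J$, placing $\bar y\in T_j$ for some $j\in J$. Together this gives $\bar y\in\bigl(\bigcup_{i\in I}\tau_i\bigr)\cap\bigl(\bigcup_{j\in J}T_j\bigr)$.

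For (ii)(a), fix $m\in S_i$ and let $I:=\{k:\alpha_k(m)=\max_l\alpha_l(m)\}$, so that $i\in I$ and $m\in A_{IJ}^\circ$ for the appropriate $J$. Pick any $\bar y\in(\partial^c\phi)(m)$ (nonempty by $c$-convexity of $\phi$ and compactness of $B$); by (i), $\bar y\in\tau_l$ for some $l\in I$. If $l=i$ we are done; otherwise, $\alpha_i(m)=\alpha_l(m)$ implies that the transposition $g=(i,l)$ fixes $m$, so by $G$-equivariance of $\partial^c\phi$ we have $g\cdot\bar y\in(\partial^c\phi)(m)\cap\tau_i$. Part (ii)(b) is proved identically, using that $m\in\sigma_i$ iff $\alpha_i(m)=0$, so both $i$ and the index $l\in J$ provided by (i) have $\alpha$-coordinate $0$ in $m$. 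The analogous statements for $\psi\in\cQ_{\sym}$ follow by the entirely symmetric roles of $A$ and $B$.

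The main obstacle is the monotonicity lemma itself, which underpins everything. The difficulty there is minor but bookkeeping-heavy: one must carefully track the $G$-action on the barycentric expansions of $m$ and $\bar y$, and use the values $\langle m_k,n_l\rangle=-(d+1)$ if $k=l$ and $1$ otherwise to compute $\langle m,n_i-n_l\rangle$ with the correct sign. Once the lemma is in place, the two parts of the statement are extracted by elementary case analysis on where minima and maxima of the $\beta_k(\bar y)$ are attained.
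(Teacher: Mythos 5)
Your proof is correct. For part~(i) it is essentially the paper's argument: your ``monotonicity lemma'' is just the transposition computation the paper runs inline, comparing $\langle m,g(n)\rangle$ with $\langle m,n\rangle$ for a transposition $g$, using $G$-invariance of $\phi^c$ and the identity $\langle m,g(n)\rangle-\langle m,n\rangle=(d+2)(\alpha_l-\alpha_j)(\beta_l-\beta_j)$ from \cite[Lemma~2.1]{HJMM24}; the paper phrases it as a contradiction (find a transposition strictly increasing $\langle m,\cdot\rangle-\phi^c$), while you state the inequality positively and read off where the min and max of $\beta$ must sit, but the content is identical. For part~(ii), however, your route is genuinely different from the paper's. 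The paper approximates $m\in S_i$ by a sequence $m_r\in S_i^\circ$, invokes the inclusion $\partial^c\phi(S_i^\circ)\subset\tau_i$ from \cite[Lemma~4.1]{HJMM24}, and passes to the limit using compactness of $\tau_i$ and continuity of $\phi$, $\phi^c$, and the pairing. You instead deduce~(ii) directly from~(i) together with $G$-equivariance of $\partial^c\phi$: part~(i) produces some $l\in I$ with $\bar y\in\tau_l$, and since $\alpha_i(m)=\alpha_l(m)$ the transposition $(i\,l)$ fixes $m$ while carrying $\tau_l$ onto $\tau_i$, yielding an element of $\partial^c\phi(m)\cap\tau_i$. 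This avoids the limiting argument and makes~(ii) a formal consequence of~(i) plus symmetry, so it is slightly more self-contained within the lemma; the paper's version trades that for a shorter appeal to an already-established inclusion. Both are valid.
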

Part~(i) generalizes~\cite[Lemma~4.1]{HJMM24}, which says that $(\partial^c\phi)(\sigma_i^\circ)\subset T_i$ and $(\partial^c\phi)(S_i^\circ)\subset\tau_i$.
\begin{proof}
    Set $\psi:=\phi^c\in\cQ_{\sym}$.
    To prove~(i), consider $n\in(\partial^c\phi)(m)$ and write $m=\sum_l\alpha_lm_l$, $n=\sum_l\beta_ln_l$, where $\sum\alpha_l=\sum\beta_l=1$ and $\min_l\alpha_l=\min_l\beta_l=0$. 
    Suppose $n\not\in\bigcup_{j\in J}T_j$. Then there exists $l\not\in J$ such that $\beta_l>\beta_j$ for all $j\in J$. Pick any $j\in J$, and let $g\in G$ be the transposition exchanging $j$ and $l$. Now $\alpha_l>\alpha_j$, and a simple calculation as in the proof of~\cite[Lemma~2.1]{HJMM24} shows that $\langle m,g(n)\rangle-\langle m,n\rangle=(d+2)(\alpha_l-\alpha_j)(\beta_l-\beta_j)>0$, and hence $\langle m,g(n)\rangle-\psi(g(n))>\langle m,n\rangle-\psi(n)$ by $G$-invariance of $\psi$, contradicting $n\in(\partial^c\phi)(m)$. A similar argument shows that $n\in\bigcup_{i\in I}\tau_i$. 

    As for~(ii), pick $m\in S_i$, and a sequence $(m_r)_r$ in $S_i^\circ$ converging to $m$. Then $n_r:=(\partial^c\phi)(m_r)\in\tau_i$, so by compactness, we may assume $n_r$ converges to some element $n\in\tau_i$. Now, for any $r$, we have 
    \[
    \phi(m_r)+\psi(n_r)=\langle m_r,n_r\rangle,
    \]
    and continuity gives
    \[
    \phi(m)+\psi(n)=\langle m,n\rangle,
    \]
    showing that $n\in(\partial^c\phi)(m)$. 

    A similar argument shows that $(\partial^c\phi)(\sigma_i)\cap T_i\ne\emptyset$ and completes the proof.
\end{proof}
\begin{proof}[Proof of Theorem~B]
    We proceed in several steps, exploiting the symmetry together with the boundary regularity.

    \smallskip
    \textbf{Step 1}: We have $(\partial^c\phi)(A_{IJ}^\circ)=B_{JI}^\circ$, $(\partial^c\phi)(A_{IJ})=B_{JI}$, $(\partial^c\psi)(B_{JI}^\circ)=A_{IJ}^\circ$, and $(\partial^c\psi)(B_{JI})=A_{IJ}$ for any nonempty disjoint subsets $I$ and $J$.

    \smallskip
    It suffices to prove the statement for $\phi$, and it also suffices to prove $(\partial^c\phi)(A_{IJ}^\circ)=B_{JI}^\circ$, since $A_{IJ}$ (resp. $B_{JI}$) is the disjoint union of $A_{I'J'}^\circ$ (resp.\ $B_{J'I'}$) over all disjoint sets $I'$, $J'$ containing $I$, $J$.
    
    We first prove that $(\partial^c\phi)(A_{IJ}^\circ)\subset B_{JI}$. Pick any $m\in A_{IJ}^\circ$, and $n\in(\partial^c\phi)(m)$.
    By Lemma~\ref{lem:GradientInclusion}, there exist $i\in I$ and $j\in J$ such that $n\in T_j\cap\tau_i$.
    First suppose $n\not\in\bigcap_{l\in J}T_l$. Then there exists a permutation $g$ of $J$ such that $g(n)\ne n$. Note that $g(m)=m$ since $m\in A_{IJ}$. Consequently, $g(n)\in \partial^c\phi(m)$. Set $n_\theta:=(1-\theta)n+\theta g(n)$ for $0\le\theta\le 1$. Then $\theta\mapsto n_\theta$ is a nonconstant affine path in $\tau_i$. By Theorem~\ref{thm:breg}~(ii), $\theta\mapsto\psi(n_\theta)$ is strictly convex, whereas the function $\theta\mapsto\langle m,n_\theta\rangle$ is affine. It follows that the function
    \[
    \chi(\theta):=\psi(n_\theta)+\phi(m)-\langle m,n_\theta\rangle\
    \]
    on $[0,1]$ is strictly convex. Now $\chi\ge 0$, whereas $\chi(0)=\chi(1)=0$, a contradiction. Thus $n\in T_J$.
    The case when $n\not\in\bigcap_{l\in I}\tau_l$ similarly leads to a contradiction, using Theorem~\ref{thm:breg}~(i).

    Thus $(\partial^c\phi)(A_{IJ}^\circ)\subset B_{JI}$. We claim that in fact $(\partial^c\phi)(A_{IJ}^\circ)\subset B_{JI}^\circ$. This follows from the $G$-equivariance of $\partial^c\phi$, and the fact that a point in $A_{IJ}$ (resp.\ $B_{JI}$) belongs to $A_{IJ}^\circ$ (resp.\ $B_{JI}^\circ$) iff its stabilizer is exactly equal to the product of the subgroups $G_I$ and $G_J$ of permutations of $I$ and $J$.

    Thus $(\partial^c\phi)(A_{IJ}^\circ)\subset B_{JI}^\circ$. In fact, equality must hold since $\partial^c\phi$ is a surjective (muti-valued) map, and since $\{A_{IJ}^\circ\}_{I,J}$ and $\{B_{JI}^\circ\}_{I,J}$ form partitions of $A$ and $B$, respectively.
    
    \smallskip
    \textbf{Step 2}: the c-subgradient maps $\partial^c\phi\colon A\to B$ and $\partial^c\psi\colon B\to A$ are everywhere single-valued. 
    
    \smallskip 
    Indeed, pick any $m\in A$. There exist unique (nonempty) $I$ and $J$ such that $m\in A_{IJ}^\circ$. By Step~1, we have $(\partial^c\phi)(m)\subset B_{JI}^\circ$. Now suppose $(\partial^c\phi)(m)$ contains two distinct points $n,n'$. If $i\in I$, then $n,n'\in\tau_i$. As in Step~1, we get a contradiction by considering the function 
    \[
    [0,1]\ni\theta\mapsto\psi((1-\theta)n+\theta n')-(1-\theta)\langle m,n\rangle-\theta\langle m,n'\rangle,
    \]
    which is strictly convex and attains its minimum at $\theta=0$ and $\theta=1$.    

    \smallskip
    As a consequence, the c-gradients $\partial^c\phi$ and $\partial^c\psi$ are bijections, and inverse one to another.

    \smallskip
    \textbf{Step 3}:
    the c-gradient maps $\partial^c\phi\colon A\to B$ and 
    $\partial^c\psi\colon B\to A$ are H\"older continuous homeomorphisms.

    \smallskip
    As $A$, $B$ are compact Hausdorff spaces, it suffices to prove that $\partial^c\phi$ and $\partial^c\psi$ are (H\"older) continuous, and by symmetry we only need to show that  $\partial^c\phi$ is H\"older continuous on $S_i$ for each $i$. Pick any $j\ne i$ and set $\phi_{i,j}:=(\phi-n_j)\circ p_{i,j}$. On the one hand, it follows from~\cref{thm:breg} that $\phi_{i,j}$ is differentiable and strictly convex, and that $\partial\phi_{i,j}$ extends to a H\"older continuous homeomorphism of $p_{i,j}^{-1}(S_i)$ onto $q_{j,i}^{-1}(\tau_i)$. On the other hand, it follows from~\cite[Lemma~4.4]{HJMM24} that
    \[
    \phi(p_{i,j}(m))+\psi(q_{j,i}(\partial\phi_{i,j}(p_{i,j}(m)))=\langle p_{i,j}(m),q_{i,j}((\partial\phi_{i,j})(m))\rangle
    \]
    for all $m\in S_i^\circ$. This equality therefore also holds for $m\in S_i$, and we have $\partial^c\phi=q_{j,i}\circ(\partial\phi_{i,j})\circ p_{i,j}^{-1}$ on $S_i$, which shows that $\partial^c\phi$ is continuous on $S_i$.
\end{proof}
\begin{rmk}\label{rmk:3dvert1}
    In dimension $d=3$, the singular set $A_{\sing}$ is a trivalent graph, consisting of 30 edges of the form $A_{I,J}$, where $|I|=|J|=2$, 10 ``positive" vertices, of the form $A_{IJ}$, with $|I|=2$, $|J|=3$, and 10 ``negative" vertices, of the form $A_{IJ}$, with $|I|=3$, $|J|=2$. Each edge of $A_{\sing}$ contains one positive vertex and one negative vertex. The singular set $B_{\sing}$ has the same structure. Now the c-gradient maps $\partial^c\phi$ and $\partial^c\psi$ restrict to homeomorphisms between the edges $A_{IJ}$ and $B_{JI}$, mapping a positive vertex to a negative vertex, and conversely. See Figure~\ref{fig:signs}.
\end{rmk}
%
%
%
%
\section{Metric completions: proof of Theorem~C}\label{sec:completions}
Let us recall the setting. We have probability measures $\mu\in\cM_{\sym}(A)$ and $\nu\in\cM_{\sym}(B)$ that are absolutely continuous with respect to Lebesgue measure, with smooth densities bounded away from zero and infinity (or more generally, with doubling), and $\phi\in\cP_{\sym}$ satisfies $(\partial^c\phi)_*\mu=\nu$, $(\partial^c\psi)_*\nu=\mu$, where $\psi=\phi^c$.

By Theorem~\ref{thm:breg}, the metric on $\Lambda_A$  associated to $\phi$ is smooth and strictly convex on $A_{\reg}$, and gives rise to a Hessian metric $g_\phi$ there. Similarly, we obtain a Hessian metric $g_\psi$ on $B_{\reg}$. 

\begin{lem}\label{lem:isom}
The c-gradient map $\partial^c\phi\colon(A_{\reg},g_\phi)\to(B_{\reg},g_\psi)$ is an isometry.
\end{lem}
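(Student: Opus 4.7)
The plan is to reduce the global statement to a chart-by-chart computation, and then invoke the classical fact that the gradient map of a smooth strictly convex function is an isometry between the Hessian metric of the function and the Hessian metric of its Legendre dual. Since $A_{\reg}$ is covered by the charts $(S_i^\circ,p_{i,j}^{-1})$ and $(\sigma_i^\circ,p_{j,i}^{-1})$ (and similarly for $B_{\reg}$), and by Lemma~\ref{lem:GradientInclusion}~(i) the map $\partial^c\phi$ sends $S_i^\circ$ into $\tau_i^\circ$ and $\sigma_i^\circ$ into $T_i^\circ$, it suffices to treat these two cases separately.

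Consider the case $S_i^\circ\to\tau_i^\circ$. Fix $j\ne i$, and set $u:=(\phi-n_j)\circ p_{i,j}$ on $U:=p_{i,j}^{-1}(S_i^\circ)$ and $v:=\psi\circ q_{j,i}$ on $V:=q_{j,i}^{-1}(\tau_i^\circ)$. By the discussion recalled in Section~\ref{sec:Setup}, $u$ and $v$ are mutual Legendre transforms, and by Theorem~\ref{thm:breg} they are smooth and strictly convex; by construction, the Hessian metrics $g_\phi$ and $g_\psi$ read in these charts as the Hessians of $u$ and $v$. Moreover, by the identity at the end of Step~3 in the proof of Theorem~B, the c-gradient factors as $\partial^c\phi=q_{j,i}\circ(\partial u)\circ p_{i,j}^{-1}$, so the claim in the chart amounts to showing that $\partial u\colon U\to V$ pulls $\mathrm{Hess}(v)$ back to $\mathrm{Hess}(u)$.

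This last statement is a one-line computation: the differential of $\partial u$ at $x$ is the symmetric matrix $\mathrm{Hess}(u)(x)$, and Legendre duality gives $\mathrm{Hess}(v)(\partial u(x))=\mathrm{Hess}(u)(x)^{-1}$; hence
\[
(\partial u)^*\mathrm{Hess}(v)(x)
=\mathrm{Hess}(u)(x)^{\mathsf T}\,\mathrm{Hess}(u)(x)^{-1}\,\mathrm{Hess}(u)(x)
=\mathrm{Hess}(u)(x).
\]
The other case $\sigma_i^\circ\to T_i^\circ$ is handled identically, with $\phi\circ p_{j,i}$ and $(\psi-m_j)\circ q_{i,j}$ playing the roles of $u$ and $v$. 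There is no substantive obstacle here; the only thing to keep straight is the bookkeeping of charts, and in particular that the c-gradient in charts is literally the ordinary gradient of the (shifted) potential, so that the isometry reduces to the familiar Legendre-duality identity between Hessian metrics.
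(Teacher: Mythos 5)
Your proof is correct and follows essentially the same route as the paper: pass to the charts from Section~\ref{sec:Setup} (where the c-gradient becomes the ordinary gradient of a smooth strictly convex potential and $g_\phi$, $g_\psi$ become Hessian metrics), and then invoke the classical identity $\mathrm{Hess}(u)^{\mathsf T}\,\mathrm{Hess}(u)^{-1}\,\mathrm{Hess}(u)=\mathrm{Hess}(u)$ coming from Legendre duality. You are a little more explicit than the paper about the chart bookkeeping, but the substance is identical; the only small imprecision is citing Lemma~\ref{lem:GradientInclusion}~(i) for $\partial^c\phi(S_i^\circ)\subset\tau_i^\circ$, which as stated gives containment in $\tau_i$ (the open-interior refinement is Step~1 of the proof of Theorem~B), though this does not affect the argument.
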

\begin{proof}
    By passing to coordinates, and using \cite[Lemma 4.4]{HJMM24}, this follows from a corresponding statement about the Legendre transform on $\R^d$. Consider a smooth, strictly convex function $f(x)$ with Legendre transform $g(y)$. The relation $g(\nabla f(x))=x\cdot \nabla f(x) -f(x)$ implies $\nabla g \circ \nabla f(x) =x$ and $H(g)\circ\nabla f(x) H(f)(x)=\mathrm{Id}$ where $H(\cdot)$ is the Hessian. Pulling back the metric $\sum _{i,j}H(g)_{i,j}\mathrm{d}y_i\otimes \mathrm{d}y_j$ along $y=\nabla f(x)$ gives
    \begin{align*}
    (\nabla f)^*(\sum_{i,j}H(g)_{i,j}\mathrm{d}y_i\otimes \mathrm{d}y_j)(x)&=\sum_{i,j}(H(g)\circ \nabla f(x) (H(f)(x))^2)_{i,j}\mathrm{d}x_i\otimes \mathrm{d}x_j\\
    &=\sum_{i,j}H(f)(x)_{i,j}\mathrm{d}x_i\otimes\mathrm{d}x_j,
    \end{align*}
    and we are done.
\end{proof}

Let us write $d_\phi$ and $d_\psi$ for the metrics (distances) on $A_{\reg}$ and $B_{\reg}$ induced by $g_\phi$ and $g_\psi$. We also write $d_{\mathrm{Eucl}}$ for the metric on $A$ induced by any Euclidean metric on $M_\R$ (via an isomorphism $M\simeq\Z^{d+1}$). The Euclidean metric on $B$ is also denoted $d_{\mathrm{Eucl}}$.

The main result needed for the proof of Theorem~C is the following estimate.
\begin{lem}\label{lem:DistUpperBound}
    There exists $C>0$ and $\beta\in(0,1)$ such that
    \begin{equation}\label{equ:dphimaj}
    d_\phi(x,y)\le Cd_{\mathrm{Eucl}}(x,y)^\beta
    \end{equation}
    for all $x,y\in A_{\reg}$. Moreover, for any compact subset $K$ of $A_{\reg}$ there exists $\delta>0$ such that
    \begin{equation}\label{equ:dphimin}
    d_\phi(x,y)\ge\delta d_{\mathrm{Eucl}}(x,y)
    \end{equation}
    for $x,y\in K$. The analogous estimates also hold for the distances $d_\psi$ and $d_{\mathrm{Eucl}}$ on $B_{\reg}$.
\end{lem}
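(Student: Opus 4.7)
The plan is to handle the upper and lower bounds separately, deducing the statements for $(B_{\reg},d_\psi)$ by symmetry. For the upper bound, the core observation is that in any chart where the Hessian metric comes from a convex function $u$ on a bounded convex $U\subset\R^d$, the length of the line segment $\gamma(t)=x+t(y-x)$ satisfies, by Cauchy--Schwarz,
\[
L_u(\gamma)^2\le\int_0^1 (y-x)^T D^2u(\gamma(t))(y-x)\,dt=(y-x)\cdot(\nabla u(y)-\nabla u(x)).
\]
By Theorem~\ref{thm:breg}(i), $u$ extends to a $C^{1,\alpha}$ function on $\bar U$, so the right-hand side is $\le C|y-x|^{1+\alpha}$, giving $L_u(\gamma)\le\sqrt C\,|y-x|^{(1+\alpha)/2}$. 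Since the chart maps $p_{i,j}$ and $p_{j,i}$ are bi-Lipschitz with respect to Euclidean distances, this yields $d_\phi(x,y)\le C_1\,d_{\mathrm{Eucl}}(x,y)^\beta$ with $\beta=(1+\alpha)/2$ whenever $x,y\in A_{\reg}$ lie in a common chart of type $S_i^\circ$ or $\sigma_j^\circ$.

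For arbitrary close $x,y\in A_{\reg}$ not sharing a chart, the plan is to build a path from $x$ to $y$ in $A_{\reg}$ made of a uniformly bounded number of such segments, each lying in a common chart, with total Euclidean length $\le C\,d_{\mathrm{Eucl}}(x,y)$. For instance, if $x\in\sigma_j^\circ$ but $y\in S_i^\circ$ with $\alpha_j(y)>0$, one inserts the intermediate point $z=\sum_k\tilde\alpha_k m_k$ with $\tilde\alpha_j=0$ and $\tilde\alpha_k=\alpha_k(y)+\alpha_j(y)/(d+1)$ for $k\ne j$: this lies in $\sigma_j^\circ\cap S_i^\circ$ and satisfies $|z-y|\le C\alpha_j(y)\le CL\,d_{\mathrm{Eucl}}(x,y)$ (using Lipschitz continuity of the barycentric coordinates on $A$), so the two-segment path $x\to z\to y$ suffices. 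Analogous perturbations, tailored to the stratification of $A_{\reg}$ by tied maxima/zeros of the barycentric coordinates, handle the remaining configurations. For $d_{\mathrm{Eucl}}(x,y)\ge\epsilon_0$, I would use that $d_\phi$ is bounded on $A_{\reg}$: each chart has finite $\phi$-diameter by the Caffarelli estimate just proved, $A_{\reg}$ is connected, and it is covered by finitely many charts whose pairwise intersections are nonempty on enough of $A_{\reg}$ to chain every pair of charts together.

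For the lower bound, let $K\subset A_{\reg}$ be compact. Since $A_{\reg}$ is open in the compact metric space $A$, we can choose an open $U\supset K$ with $\bar U\subset A_{\reg}$ compact, and set $r_0:=d_{\mathrm{Eucl}}(K,A\setminus U)>0$. The compact set $\bar U$ is covered by finitely many charts in which, by Theorem~\ref{thm:breg}(ii), $\phi$ is smooth and uniformly strictly convex, so the Hessian is smooth and positive definite; hence $g_\phi\ge\delta_1 g_{\mathrm{Eucl}}$ on $\bar U$ for some $\delta_1>0$. For $x,y\in K$ and any path $\gamma$ in $A_{\reg}$ from $x$ to $y$, either $\gamma\subset\bar U$, in which case $L_\phi(\gamma)\ge\delta_1 d_{\mathrm{Eucl}}(x,y)$, or $\gamma$ exits $\bar U$, in which case the initial portion of $\gamma$ inside $\bar U$ has Euclidean length $\ge r_0$, giving $L_\phi(\gamma)\ge\delta_1 r_0\ge(\delta_1 r_0/\mathrm{diam}(K))\,d_{\mathrm{Eucl}}(x,y)$. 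Taking $\delta=\min(\delta_1,\delta_1 r_0/\mathrm{diam}(K))$ and passing to the infimum over $\gamma$ gives the required bound.

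The main obstacle is the path-construction step in the upper bound: ensuring that, regardless of how $x$ and $y$ sit relative to the stratification of $A_{\reg}$ by tied maxima and zeros of the barycentric coordinates, a uniformly bounded number of short Euclidean segments, each in a common chart and all contained in $A_{\reg}$, always suffices to connect them. The chart-level Caffarelli estimate and the tubular-neighborhood argument for the lower bound are essentially routine once this combinatorial patching is in place.
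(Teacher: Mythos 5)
Your proof follows the same strategy as the paper: the chart-level estimate is the Jensen/Cauchy--Schwarz bound $\int_0^1\sqrt{h''}\le\sqrt{h'(1)-h'(0)}\lesssim d_{\mathrm{Eucl}}(x,y)^{(1+\alpha)/2}$ coming from Caffarelli's $C^{1,\alpha}$ boundary regularity, the general upper bound comes from chaining a bounded number of such segments through intermediate points, and the lower bound comes from compactness plus smoothness and strict convexity on the interior. The detail you provide on the chaining step (the explicit perturbation of barycentric coordinates) and on the lower bound (the tubular-neighborhood and curve-escape argument) is a sound fleshing out of exactly what the paper's terse proof asserts.
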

\begin{proof}
    To prove~\eqref{equ:dphimaj}, first suppose that $x,y\in\sigma_i^\circ$ for some $i$.
    Set $\gamma(t)=(1-t)x+ty$ for $0\le t\le 1$. 
    By Theorem~\ref{thm:breg}, the function $h:=\phi\circ\gamma$ is smooth and strictly convex on $[0,1]$, and we have 
      \begin{equation*}
      d_\phi(x,y)
      \le \int_0^1\sqrt{h''(t)}\,dt
      \le \sqrt{\int_0^1h''(t)\,dt}
      =\sqrt{h'(1)-h'(0)}
      \lesssim\sqrt{d_{\mathrm{Eucl}}(x,y)^{1+\alpha}},
    \end{equation*}
    using Jensen's inequality, and Theorem~\ref{thm:breg}.
    A similar argument works if $x,y\in S_j^\circ$ for some $j$.
    In the general case, we can find a sequence of points 
    \[x=x_0,x_1,\dots,x_m=y,\]
    with $m\le 2d$, such that for each $l$ we have $x_l,x_{l-1}\in\sigma_i^\circ$ for some $i$ or $x_l,x_{l-1}\in S_j^\circ$ for some $j$, and such that $d_{\mathrm{Eucl}}(x_l,x_{l-1})\lesssim d_{\mathrm{Eucl}}(x,y)$. We then obtain the desired estimate using the triangle inequality.

    Finally, the lower bound in~\eqref{equ:dphimin} follows from the fact that the metric on $\Lambda_A$ defined by $\phi$ is smooth and strictly convex on $A_{\reg}$.
\end{proof}
\begin{proof}[Proof of Theorem~C]
We prove the properties for $A$; the ones for $B$ are proved in the same way.

    It follows from~\eqref{equ:dphimaj} that 
any Cauchy sequence in $(A_{\reg},d_{\mathrm{Eucl}})$ is also a Cauchy sequence in $(A_{\reg},d_\phi)$; hence the identity map extends uniquely to a H\"older continuous map $\chi_A\colon(A,d_{\mathrm{Eucl}})\to\overline{(A_{\reg},d_\phi)}$. Similarly,~\eqref{equ:dphimin} implies $\chi_A^{-1}(A_{\reg})=A_{\reg}$.

To see that $\chi_A$ is surjective, pick any $x\in\overline{(A_{\reg},d_\phi)}$, defined by a Cauchy sequence $(x_i)_i$ in $(A_{\reg},d_\phi)$. As $(A,d_{\mathrm{Eucl}})$ is compact, $(x_i)_i$ has a convergent subsequence $(x_{i_j})_j$, say with limit $x'\in A$, with respect to $d_{\mathrm{Eucl}}$. By~\eqref{equ:dphimaj}, the sequence $(x_{i_j})_j$ maps to a Cauchy sequence in $(A_{\reg},d_\phi)$ equivalent to the given one, so that $\chi_A(x')=x$, proving surjectivity. 

Next we prove that the fibers of $\chi_A$ are connected.
Consider $x,y\in A$ with $x\ne y$ and $\chi_A(x)=\chi_A(y)$. By what precedes we have $x,y\in A_{\sing}$. Thus there exist Cauchy sequences $(x_i)_i$ and $(y_i)_i$ in $A_{\reg}$, converging to $x$ and $y$ respectively with respect to $d_{\mathrm{Eucl}}$ (thus also with respect to $d_\phi$ by~\eqref{equ:dphimaj}), such that $d_\phi(x_i,y_i)\rightarrow 0$. For each $i$, let $\gamma_i$ be a smooth curve in $A_{\reg}$ from $x_i$ to $y_i$ such that the length $\ell_i$ of $\gamma_i$ in $(A_{\reg},g_\phi)$ satisfies $\ell_i\le d_\phi(x_i,y_i)+i^{-1}$. As $A$ is compact, after passing to a subsequence, we may assume that the compact subsets $\gamma_i$ of $(A,d_{\mathrm{Eucl}})$ converge in Hausdorff distance to some compact subset $\Gamma$ containing $x$ and $y$; since the $\gamma_i$ are connected, so is $\Gamma$, see~\cref{lem:Gamma conn}.
    
We claim that $\Gamma$ is contained in the fiber of $x$ and $y$. To see this, pick any $z\in \Gamma$, and let $(z_i)_i$ be a sequence in $A_{\reg}$ such that $z_i \in \gamma_i$ and $d_{\mathrm{Eucl}}(z_i,z) \rightarrow 0$. Let $\epsilon>0$ and assume $i$ is large enough that $d_{\mathrm{Eucl}}(x_i,x) < \epsilon$, $d_{\mathrm{Eucl}}(z_i,z) < \epsilon$, and $\ell_i<\epsilon$. By~\eqref{equ:dphimaj} we get
\begin{align*}
  d_\phi(\chi_A(x),\chi_A(z))
  & \leq d_\phi(\chi_A(x),\chi_A(x_i))+ d_\phi(\chi_A(x_i),\chi_A(z_i)) +d_\phi(\chi_A(z_i),\chi_A(z)) \\
  & \leq  Cd_{\mathrm{Eucl}}(x,x_i)^\beta+ \ell_i + Cd_{\mathrm{Eucl}}(z_i,z)^\beta < \epsilon + 2C\epsilon^\beta.
\end{align*}
As $\epsilon$ is arbitrary small, we get $d_\phi(\chi_A(x),\chi_A(z))=0$, and hence $\chi_A(x)=\chi_A(z)$.

  Finally we prove that $\chi$ is injective in dimension $d\le 2$. When $d=1$, we have $A_{\reg}=A$ and there is nothing to prove. When $d=2$, $A_{\sing}$ is a finite set (consisting of six points), so the fact that the fibers of $\chi_A$ are connected implies that they are singletons.
\end{proof}

\begin{lem} \label{lem:Gamma conn}
    Let $(\Gamma_i)_i$ be a sequence of closed subsets of a compact metric space $(X,d)$ converging in Hausdorff distance to a nonempty subset $\Gamma$. If the $\Gamma_i$ are connected, then so, too, is $\Gamma$.
\end{lem}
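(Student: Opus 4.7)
The plan is to argue by contradiction, using the definition of Hausdorff convergence to transfer a hypothetical separation of $\Gamma$ back to a separation of $\Gamma_i$ for all large $i$.

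First I would observe that $\Gamma$ is automatically closed (a Hausdorff limit of closed sets is closed), hence compact as a closed subset of the compact space $X$. Now assume, for contradiction, that $\Gamma$ is disconnected, so that we can write $\Gamma = F_1 \sqcup F_2$ where $F_1, F_2$ are nonempty, disjoint, and (relatively, hence absolutely) closed in $X$. Since $F_1$ and $F_2$ are disjoint compact subsets of the metric space $X$, they are at positive distance: $\delta := d(F_1,F_2) > 0$. Set
\[
U_k := \{ x \in X : d(x,F_k) < \delta/3 \}, \qquad k=1,2,
\]
so that $U_1$ and $U_2$ are disjoint open sets with $F_k \subset U_k$.

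Next I would use the Hausdorff convergence $\Gamma_i \to \Gamma$ to choose $i_0$ so large that for all $i \ge i_0$ the (symmetric) Hausdorff distance satisfies $d_H(\Gamma_i,\Gamma) < \delta/3$. This forces, on the one hand, that every point of $\Gamma_i$ lies within $\delta/3$ of $\Gamma = F_1 \cup F_2$, hence $\Gamma_i \subset U_1 \cup U_2$. On the other hand, picking any $p_1 \in F_1$ and $p_2 \in F_2$, the condition $d_H(\Gamma_i,\Gamma) < \delta/3$ yields points $q_1^{(i)}, q_2^{(i)} \in \Gamma_i$ with $d(q_k^{(i)},p_k) < \delta/3$, so that $q_k^{(i)} \in U_k$. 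Thus $\Gamma_i$ meets both $U_1$ and $U_2$ while being contained in their disjoint union, contradicting the connectedness of $\Gamma_i$. Therefore no such separation exists, and $\Gamma$ is connected.

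The argument is essentially routine; the only point requiring a little care is the choice of the common $\delta/3$-neighborhoods and the verification that Hausdorff closeness simultaneously rules out escape from $U_1 \cup U_2$ and guarantees nonempty intersection of $\Gamma_i$ with each $U_k$. No deeper input is needed — in particular, no use of the geometry from the rest of the paper.
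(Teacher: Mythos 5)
Your proof is correct and follows essentially the same strategy as the paper: assume a separation $\Gamma = F_1 \sqcup F_2$, build disjoint open neighborhoods, and use Hausdorff closeness to show $\Gamma_i$ is contained in their union yet meets both, contradicting connectedness. The only cosmetic difference is that you work directly with the Hausdorff distance and a concrete $\delta/3$, whereas the paper invokes a set-theoretic characterization of the Hausdorff limit; your version is a bit more explicit about why $\Gamma_i \subset U_1\cup U_2$ for large $i$.
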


\begin{proof}
    We recall from the theory of metric spaces that
    \begin{align} \label{eq:Gamma}
    \begin{split}
        \Gamma
        & = \{ x \in X \,|\, \forall \varepsilon>0 \quad \lvert \{ i \,|\, B(x,\varepsilon) \cap \Gamma_i = \emptyset \} \rvert < \infty \}  \\
        & = \{ x \in X \,|\, \forall \varepsilon>0 \quad  \lvert \{ i \,|\, B(x,\varepsilon) \cap \Gamma_i \neq \emptyset \}\rvert = \infty \}.
    \end{split}
    \end{align}
    Assume by contradiction that $\Gamma$ is not connected, so we can write $\Gamma=V_1\cup V_2$ with $V_1, V_2$ closed disjoint subsets. There exist disjoint open subsets $U_1$ and $U_2$ containing $V_1$ and $V_2$ respectively (as $U_1$ take for instance the union over the points $x \in V_1$ of the open balls $B(x,\tfrac{d(x,V_2)}{3})$). Then $U_1 \cup U_2$ is an open containing $\Gamma$, and for i large enough $\Gamma_i \subset U_1 \cup U_2$. Suppose that, for any $i$ sufficiently large, we have $\Gamma_i \subset U_1$ or $\Gamma_i \subset U_2$; without loss of generality assume $\Gamma_i \subset U_1$ for infinitely many $i$. Then, for $x \in V_2$, the open $U_2$ contains $x$ and doesn't intersect infinitely many $\Gamma_i$, which is impossible by \cref{eq:Gamma}. Thus, there exists $i$ such that $\Gamma_i=(\Gamma_i \cap U_1)\cup ( \Gamma_i \cap U_2)$ and $\Gamma_i \cap U_j \neq \emptyset$ for $j=1,2$. This implies that $\Gamma_i$ is not connected, a contradiction which concludes the proof. 
\end{proof}

\begin{rmk}\label{rmk:3dvert2}
In dimension $d=3$, the singular locus $A_{\sing}$ is a trivalent graph, see Figure~\ref{fig:signs} and~\cref{rmk:3dvert1}. Using Theorem~C, we can write the metric completion of $(A,d_\phi)$ as $A_{\reg}\sqcup A'_{\sing}$, and we have a continuous surjection $\chi\colon A_{\sing}\to A'_{\sing}$ with connected fibers. We expect this to be a homeomorphism, but our methods, based on boundary regularity, do not seem to yield this. In fact, we are not able to rule out that $A'_{\sing}$ is a singleton, even in the case when $\mu$ and $\nu$ are both equal to Lebesgue measure. Indeed, suppose that $m$ and $m'$ are adjacent vertices in $A_{\sing}$. We need to rule out the existence of curves $\gamma_i$ in $A_{\reg}$ that are short in the $g_\phi$-metric, but whose endpoints converge to $m$ and $m'$, respectively. see Appendix~\ref{app:ShortCurves} for a discussion.
\end{rmk}

\appendix
\section{$C^{1,\alpha}$-metrics with short curves}\label{app:ShortCurves}
Consider the situation in Theorem~C in dimension $d=3$. Pick two points $x,y$ on one of the segments in $A_{\sing}$. One could imagine that there exist points $x',y'$ in $\sigma_i^\circ$ such that $d_{\mathrm{Eucl}}(x,x')$ and $d_{\mathrm{Eucl}}(y,y')$ are small, but that $d_\phi(x',y')$ is also small. Then the map from $(A,d_{\mathrm{Eucl}})$ to the metric completion of $(A,d_\phi)$ may identify $x$ and $y$.
The example below illustrates the problem.
\begin{exam}
    Suppose that $\varphi\colon[0,1]\to\R$ is a convex function for which there exist $C>0$, $\alpha\in(0,1)$ and $\beta>1$ such that 
    \[
    C^{-1}(y-x)^\beta\le\varphi'(y)-\varphi'(x)\le C(y-x)^\alpha
    \]
   for all $0<x<y<1$. One may then wonder whether there exists  $\epsilon=\epsilon(C,\alpha,\beta)>0$ such that 
   \[
   \int_0^1\sqrt{\varphi''(t)}\,dt\ge\epsilon.
   \]
   In other words, $\varphi$ defines a Hessian metric on $(0,1)$ and we ask whether the length of $(0,1)$ in this metric is bounded below by a constant that only depends on $C,\alpha,\beta$.

   The following example gives a negative answer.
   Pick coprime integers $M,N\gg1$ such that 
   \begin{equation} \label{eq:MNchoice} M^\alpha\le N^{1-\alpha} \end{equation}
   and define $f\colon[0,1)\to\R_{>0}$ as a piecewise constant function given by 
   \[
   f|_{[\tfrac{j}{MN},\tfrac{j+1}{MN})}=
   \begin{cases}
       M &\text{if $M\vert j$}\\
       N^{1-\beta} &\text{otherwise}
   \end{cases}
   \]
  for $0\le j<MN$. Then define a convex function $\varphi\colon[0,1]\to\R_{\ge0}$ by setting 
  \[
    \varphi(x)=\int_0^x(x-t)f(t)\,dt.
  \]
  This function satisfies $\varphi(0)=\varphi'(0)=0$ and $\varphi''=f$. Moreover, 
  \[
  0\le\int_0^1\sqrt{\varphi''(t)}\,dt
  =\frac1M\sqrt{M}+\frac{M-1}{M}N^{-\frac{\beta-1}{2}}
  \ll1.
  \]
    We claim that 
    \begin{equation}
    \label{eq:BoundOnDerivative}
    \frac{1}{2}(y-x)^\beta\le\varphi'(y)-\varphi'(x)\le 3(y-x)^\alpha
    \end{equation}
   for all $0<x<y<1$. This amounts to proving that 
    \begin{equation}
    \label{eq:BoundOnIntegral}
    \frac{1}{2}|I|^\beta\le\int_If\le 3|I|^\alpha
    \end{equation}    
for any subinterval $I\subset[0,1]$. We will begin with the first inequality of \eqref{eq:BoundOnIntegral}. 
We get two cases:
\begin{itemize}
\item either $|I|\leq 1/N$
\item or $|I|\geq 1/N$ and $|f^{-1}(M)\cap I| \geq \frac{1}{2M}|I|$.
\end{itemize}
In the first case, we have
\[
 \int_I f \geq N^{1-\beta}|I| = \left(\frac{1}{N}\right)^{\beta-1}|I| \geq |I|^{\beta-1}|I| = |I|^\beta.
\]
In the second case, we have 
\[
\int_I f \geq \frac{M}{2M}|I| = |I|/2 \geq |I|^\beta/2
\]
since $|I|\leq 1$. 

We now prove the second inequality of \eqref{eq:BoundOnIntegral}. Similarly as above, we get two cases:
\begin{itemize}
    \item either $\frac{k-1}{MN}\leq |I|\leq \frac{k}{MN}$ for some $k\in \{1, \ldots, M\}$ and $|f^{-1}(M)\cap I| \leq \frac{2}{k}|I|$
    \item or $|I|\geq 1/N$ and $|f^{-1}(M)\cap I| \leq \frac{2}{M}|I|$.
\end{itemize}
In the first case, we have
\begin{eqnarray*} 
\int_I f & \leq & N^{1-\beta}|I| + M\frac{2}{k}|I| 
= \left(N^{1-\beta} + \frac{2M}{k}\right)|I|^{1-\alpha} |I|^\alpha \\
& \leq & \left(N^{1-\beta} + \frac{2M}{k}\right)\frac{k^{1-\alpha}}{M^{1-\alpha}N^{1-\alpha}} |I|^\alpha \\
& \leq & \left(N^{1-\beta} + \frac{2M}{k}\right)\frac{k^{1-\alpha}}{M^{1-\alpha}M^\alpha} |I|^\alpha 
= \left(N^{1-\beta}\frac{k^{1-\alpha}}{M} + 2k^{-\alpha}\right) |I|^\alpha 
\leq 3 |I|^\alpha,
\end{eqnarray*}
where the third inequality uses \eqref{eq:MNchoice}. In the second case, we have
\[
\int_I f \leq N^{1-\beta}|I| + M\frac{2}{M}|I| = \left(N^{1-\beta} +2\right)|I| \leq 3|I| \leq 3|I|^\alpha,
\]
since $|I|\leq 1$.
\end{exam}

%
%
%
%

\end{document}